%% file: ex_article.tex
\documentclass[review,hidelinks,onefignum,onetabnum]{siamart250211}
\usepackage{tikz}
\usepackage[table,xcdraw]{xcolor}
\usepackage{multirow}
\usepackage{multicol}
\usepackage{enumitem}
\usepackage{subcaption}

\input{ex_shared}

\ifpdf
\hypersetup{
  pdftitle={Third-Order Iterative Algorithms},
  pdfauthor={K. D. Prabhu, S. Singh and V. A. Vijesh}
}
\fi

\begin{document}
\maketitle
\begin{abstract}
This paper develops an efficient iterative method for computing all zeros of solutions to second-order ordinary differential equations (ODEs). Approximating the solution of an associated Riccati differential equation using the trapezoidal rule leads to the well-known Halley's method. To improve computational efficiency, a modification to Halley's method is proposed. This modification ensures that, beyond the first iteration, the proposed method requires only an evaluation of the function and its first derivative, like Newton's method. The modified Halley's method also retains third-order convergence for solutions of second-order ODEs. Based on the behaviour of the coefficients of the second-order ODE, nonlocal convergence results are established for both Halley's and modified Halley's methods. This approach also provides suitable initial guesses for both methods to compute all zeros of solutions to second-order ODEs in a given interval under suitable assumptions. As applications, this manuscript presents intriguing algorithms based on the modified Halley's method to compute all nodes and weights for Gauss–Legendre and Gauss–Hermite quadratures. A comparative numerical study with recent methods demonstrates the efficiency of the proposed algorithms.
\end{abstract}
\begin{keywords}
zeros of solution to second-order ODEs, iterative method, Gauss-Hermite quadrature, Gauss-Legendre quadrature
\end{keywords}

\begin{MSCcodes}
65H05, 65D32
\end{MSCcodes}
\section{\bf Introduction}
Developing computationally efficient algorithms for finding zeros of a function, which are solutions to a second-order differential equation, is an important problem \cite{Br17, GLR07, Se10, SV90}. Many well-known special functions and various classical orthogonal polynomials are solutions of the second-order linear differential equations. Thus, studying the zeros of solutions to second-order differential equations provides a unified approach to a class of special functions and orthogonal polynomials. This manuscript 
presents computationally efficient iterative algorithms for finding the zeros of a nontrivial solution to the following second-order differential equation:
\begin{equation}\label{ch5_f_ode}
		f''(x)+r(x)f(x)=0,
\end{equation}
where $r(x)$ is a continuous function.  It is worth mentioning that as $f(x)$ is a nontrivial solution of \eqref{ch5_f_ode}, $f(x)$ and $f'(x)$ do not have common zeros. Let $\phi(x)=x-\frac{f(x)}{f'(x)}$ be the Newton iteration function corresponding to the function $f(x)$. It is interesting to note that if $x^*$ is a zero of $f(x)$, then $\phi(x^*)=x^*$, $\phi'(x^*)=0$, and $\phi''(x^*)=0$. Consequently, Newton's method has third-order convergence for functions that are solutions of \eqref{ch5_f_ode}.

Though Newton's method has higher-order convergence for $f(x)$, it has two major problems when $r(x)$ is positive. Let $x^*$ be the unique zero of $f(x)$ in an interval $\mathcal{I}$ such that $\mathcal{I}$ does not have any zero of $f'(x)$. Hence, if $r(x)>0$ in $\mathcal{I}$, then $f(x)$ and $f''(x)$ always have opposite signs in $\mathcal{I}$. Consequently, the well-known Fourier condition\cite[Theorem 1.1]{Me97} for nonlocal convergence for Newton's method fails for all the points in the interval $\mathcal{I}$. Consequently, Newton's method may not have nonlocal convergence in $\mathcal{I}$. Hence, one has to find an initial guess sufficiently close to $x^*$ to ensure the convergence of Newton's method. Thus, finding a suitable initial guess for Newton's method is difficult in this scenario. If $r(x)>0$, in the real line under suitable additional conditions on $r(x)$, one can see that $f(x)$ has an infinite number of zeros on the real line. Hence, if an interval has more than one zero of $f(x)$, then $f'(x)$ has at least one zero in that interval. Consequently, to apply Newton's method, one has to find a suitable interval for each zero such that it does not have any zero of $f'(x)$. This step is also crucial for any theoretical convergence analysis of Newton's method. 

Literature typically tackles this issue by utilising other mathematical tools to obtain an initial guess that is sufficiently close to a specific zero. In this direction, the asymptotic method is one of the tools frequently used in the literature (for example \cite{BMF12, HT13, TTO16}). In this approach, using the asymptotic behaviour of the zeros of a specific function $f(x)$, an approximate zero is obtained. Later, the approximate zero is refined using Newton's method. Algorithms in \cite{BMF12, HT13, TTO16} computed all the zeros of Legendre \cite{BMF12}, Jacobi \cite{HT13}, and Hermite \cite{TTO16} polynomials by improving the approximate zeros obtained from the asymptotic method as an initial guess for Newton's method. It is worth mentioning that the initial guess in \cite{HT13} for the zeros of Jacobi polynomials requires the zeros of the Bessel function. Similarly, the initial guess in \cite{TTO16} for the zeros of Hermite polynomials requires the zeros of Airy's function and the zeros of a nonlinear function involving the trigonometric function \textit{sine}. An interesting algorithm developed in \cite{GLR07} for finding all the zeros of solutions to linear homogeneous second-order ODEs. In this approach, the initial guesses for Newton's method are obtained by numerically solving a Cauchy problem associated with the second-order differential equation. In particular, using the Pr\"ufer transformation, an associated Cauchy problem is obtained first, and then the Runge–Kutta method is used to solve the Cauchy problem numerically. The initial guess for a specific zero depends on its predecessor; special attention is needed to find the first zero. The usage of Newton's method is also found in computing zeros of solutions of second-order ODEs via the phase function technique \cite{Br17, Ol50, SV90}. This procedure employs Newton's method as a tool to invert the phase function at selected points. 

Instead of finding a suitable initial guess for Newton's method, an alternate concept is used in \cite{GS03,  PSV25, Se02, Se10} to determine all the zeros of solutions to a second-order differential equation. In this concept, Newton's method is replaced by an iterative method with better non-local convergence properties. The other advantage is that this concept produces simple initial guesses for the iterative method that has theoretical convergence. In \cite{GS03, Se02}, an intriguing second-order iterative method was proposed for finding zeros of a function $y(x)$, that is, a solution to a second-order linear ODE. In the first step, a nice function $w(x)$ is identified with the property that the zeros of $y(x)$ and $w(x)$ are interlaced. As the functions $y(x)$ and $h(x)=\frac{y(x)}{w(x)}$ have the same zeros, an iterative method was developed in \cite{GS03, Se02} to find the zeros of $h(x)$. It was shown that $h(x)$ satisfies a Riccati differential equation. The second-order iterative method is then obtained by solving the associated Riccati differential equation semi-analytically. Sufficient conditions were derived to ensure the non-local convergence of the iterative method between the successive singular points of $h(x)$. Using bounds for the distance between the zeros of $y(x)$ and $w(x)$, a specific simple initial guess is provided to ensure convergence to a specific zero. Recently, by employing the trapezoidal rule to solve the associated Riccati differential equation, a third-order iterative method was developed in \cite{PSV25}. Similar to \cite{GS03, Se02}, sufficient conditions were derived in \cite{PSV25} to ensure the non-local convergence of the third-order iterative method between the successive singular points of $h(x)$.

In \cite{Se10}, an interesting fourth-order iterative scheme is derived to compute all the zeros of $f(x)$. As the functions $f(x)$ and $h(x)=\frac{f(x)}{f'(x)}$ have the same zeros, an iterative method was developed in \cite{Se10} to find the zeros of $h(x)$. Using \eqref{ch5_f_ode}, an associated Riccati differential equation for $h(x)$ is obtained first. The fourth-order iterative method is then obtained by solving the associated Riccati differential equation semi-analytically. If $r(x)>0$, then the zeros and singular points of $h(x)$ are interlaced. Sufficient conditions were derived to ensure the non-local convergence of the iterative method between the successive singular points. Using bounds for the distance between the zeros of $f(x)$ and $f'(x)$, a specific simple initial guess is provided to ensure convergence to a specific zero. Recently, this fourth-order method has been employed for finding nodes and weights for the Gauss-Hermite \cite{GST19}, Gauss-Laguerre \cite{GST19}, and Gauss-Jacobi quadratures \cite{GST21a}.

In this manuscript, an iterative method is derived by solving the Riccati differential equation in \cite{Se10} using the Trapezoidal Rule, similar to the approach in \cite{PSV25}, to find the zero of $h(x)$. The iterative method for determining the zeros of $h(x)$ aligns with the well-known third-order iterative technique, Halley's method, which is employed to find the zero of $f(x)$. If $f(x)$ is a non-trivial solution of \eqref{ch5_f_ode}, it suffices to evaluate the functions $r(x)$ and $f(x)$ in order to determine the second derivative of $f(x)$ for Halley's method. To further reduce computational cost, a modification to Halley's method is proposed. In the modified Halley's method, the function $r(x)$ is evaluated precisely once at an appropriate point. In other words, beyond the first iteration, the modified Halley's method requires only the evaluation of the function $f(x)$ and its first derivative, like Newton's method. It is worth mentioning that the modified version of Halley's method retains the third-order convergence. However, in contrast to Newton's method, Halley's method and its modified version have better non-local convergence properties between successive singular points of $h(x)$, i.e., they can converge more effectively even when starting from points farther from the zero. Similar to the results in \cite{Se10}, using bounds on the distance between the zeros of $f(x)$ and $f'(x)$, this study presents suitable initial guesses for Halley's method and its modification to obtain the specific zero. Based on the proposed modified Halley's method, this paper develops efficient algorithms to determine the nodes and weights for both Gauss-Hermite and Gauss-Legendre quadrature. For polynomials of large degree, the proposed algorithms yield all the nodes and weights more rapidly than those presented in \cite{GST19, GST21a}, which rely on the fourth-order method in \cite{Se10}. It is important to note that for polynomials of large degree, the proposed third-order algorithms require significantly fewer iterations compared to the third-order algorithm presented in \cite{PSV25} when calculating all the nodes for both Gauss-Hermite and Gauss-Legendre quadrature. Although the proposed algorithms are third order, the difference in the total number of iterations compared to fourth-order algorithms in \cite{GST19, GST21a} is relatively minor. In contrast to the third-order algorithm in \cite{PSV25} for finding zeros of Legendre polynomials, the proposed algorithm based on the modified Halley's method achieves higher accuracy. This accuracy is comparable to that of the fourth-order algorithm in \cite{GST21a}.

The key contributions of this paper are as follows:
\begin{itemize} 
	\item A computationally efficient third-order modified Halley's method has been presented for finding the zeros of the solution to second-order ODEs.
	\item Based on the behaviour of $r(x)$, nonlocal convergence results for Halley's method and modified Halley's method have been obtained. 
	\item Algorithms are developed based on modified Halley's method to compute all nodes and weights for the Gauss-Hermite and Gauss-Legendre quadratures.
	\item The performance of the proposed algorithms is compared with recent algorithms \cite{GST19, GST21a} that utilise a fourth-order method\cite{Se10}.
    \item For polynomials of a large degree, the proposed algorithms generate all nodes and weights more rapidly than the fourth-order algorithms \cite{GST19, GST21a}.
\end{itemize}   
The rest of the paper is structured as follows: Preliminary results from the literature are presented in Section 2. Construction and non-local convergence of the proposed third-order method are discussed in Section 3. Section 4 presents algorithms for finding all nodes and weights of the Gauss-Hermite and Gauss-Legendre quadratures. Finally, Section 5 presents the conclusions of this paper.

\section{\bf Preliminaries}
Consider a second-order linear differential equation of the form 
\begin{equation*}
 u''(x)+c_1(x)u'(x)+c_2(x)u(x)=0,   
\end{equation*}
where $c_1(x)$ and $c_2(x)$ are real valued functions. The normal form of this differential equation can be obtained by employing the well-known transformation $f(x)=e^{\frac{1}{2}\int c_1(x) {\rm d}x}u(x)$. It is straightforward to observe that the zeros of $f(x)$ and $u(x)$ coincide. Moreover, $f(x)$ satisfies the second-order ODE \eqref{ch5_f_ode} with the coefficient function $r(x)=c_2(x)-\frac{c_1^2(x)}{4}-\frac{c_1'(x)}{2}$. Define $h(x)=\frac{f(x)}{f'(x)}$, as in \cite{Se10}. Then the zeros of $f(x)$ and $h(x)$ coincide. It is easy to verify that $h(x)$ satisfies the following Riccati ODE:
	\begin{equation}\label{ch5_riccati}
		h'(x)=1+r(x)h^2(x).
	\end{equation}
It is worth noting that if $r(x)>0$, then zeros of $f(x)$ and $f'(x)$ are simple and interlaced. Let $z_{f_1}$, $z_{f_2}$ denote the consecutive zeros of $f(x)$ and $z_{f'_1}$, $z_{f'_2}$ represent the consecutive zeros of $f'(x)$ arranged such that $z_{f'_1}<z_{f_1}<z_{f'_2}<z_{f_2}$. Using the Riccati differential equation, interesting bounds between the zero of $f(x)$ and the zero of $f'(x)$ are derived in \cite{Se10}. The significance of these bounds lies in their ability to offer an initial estimate for locating the zeros of $h(x)$. We will conclude this section by presenting the lemmas that establish the bounds, along with an interesting remark concerning the sign of $h(x)$ in the interval $\mathcal{I} = (z_{f'_1}, z_{f'_2})$.

\begin{lemma}\label{ch5_lemma0}\cite[Lemma 2.1]{Se10}
Let $z_{f}$ and $z_{f'}$ represent any consecutive zeros of $f(x)$ and $f'(x)$, respectively. If $r(x)$ is positive and satisfies $0<k<r(x)<K$ between $z_{f}$ and $z_{f'}$, then $\frac{\pi}{2\sqrt{K}}<|z_{f'}-z_{f}|<\frac{\pi}{2\sqrt{k}}$.
\end{lemma}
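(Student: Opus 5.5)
We will argue through the Riccati equation \eqref{ch5_riccati} for $h=f/f'$, by comparison with the constant-coefficient cases $r\equiv k$ and $r\equiv K$. A Prüfer-type angle turns the comparison into a statement about an integral we can bound directly.

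\textbf{Setting up.} Assume without loss of generality that $z_f<z_{f'}$; the case $z_{f'}<z_f$ is symmetric. On $[z_f,z_{f'})$ the function $f'$ has no zero, so $h$ is smooth there. We have $h(z_f)=0$, and $h(x)\to\pm\infty$ as $x\to z_{f'}^-$. Because $z_f$ and $z_{f'}$ are consecutive zeros, $f$ has no zero in $(z_f,z_{f'})$, so $h$ does not vanish there. Since $h'(z_f)=1>0$, we conclude $h>0$ on $(z_f,z_{f'})$ and $h\to+\infty$ at $z_{f'}$.

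\textbf{Key step.} For $x\in(z_f,z_{f'})$ the strict bounds $k<r(x)<K$ and \eqref{ch5_riccati} give
\[
1+k\,h^2 \;<\; h' \;<\; 1+K\,h^2 .
\]
Divide by $1+rh^2$, or use the substitution $\theta_c(x)=\arctan(\sqrt{c}\,h(x))$, which gives
\[
\theta_c' = \frac{\sqrt{c}\,h'}{1+c h^2}.
\]
Taking $c=k$, the left inequality yields $\theta_k'>\sqrt{k}$. Integrating from $z_f$ to $z_{f'}$, and using that $\theta_k$ goes from $0$ to $\pi/2$, we get $\pi/2>\sqrt{k}\,(z_{f'}-z_f)$. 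This is the upper bound $|z_{f'}-z_f|<\pi/(2\sqrt{k})$.

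Taking $c=K$, the right inequality yields $\theta_K'<\sqrt{K}$. Integrating in the same way gives $\pi/2<\sqrt{K}\,(z_{f'}-z_f)$, which is the lower bound.

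\textbf{Symmetric case.} If $z_{f'}<z_f$, then $h\to\pm\infty$ at $z_{f'}^+$ and $h<0$ on $(z_{f'},z_f)$. Here $\theta_c$ increases from $-\pi/2$ to $0$, and the same integration applies.

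\textbf{Main obstacle.} The delicate point is justifying the integration up to the singular endpoint $z_{f'}$, where $h$ blows up. We handle it by integrating on $[z_f,z_{f'}-\epsilon]$, using $\theta_c(x)\to\pi/2$ as $x\to z_{f'}$, and letting $\epsilon\to0$. Strictness survives the limit because the pointwise inequality is strict and $\theta_c'$ is continuous on the open interval, so the integral of a positive continuous gap is positive.
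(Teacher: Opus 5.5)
Your proof is correct. The paper does not prove this lemma; it is quoted from \cite[Lemma 2.1]{Se10}, so there is no in-paper argument to compare yours against. Your argument integrates the Riccati equation \eqref{ch5_riccati} through $\theta_c=\arctan(\sqrt{c}\,h)$ and compares with the constant-coefficient cases $r\equiv k$ and $r\equiv K$. It is self-contained and uses only \eqref{ch5_riccati} and the sign information of Remark~\ref{ch5_remark1}, both of which the paper already relies on.

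Two points of polish:
\begin{itemize}
\item Since $r$ is only continuous, $f\in C^2$ and $h$ is only $C^1$ on $[z_f,z_{f'})$. That is all you use, so ``smooth'' should be weakened to $C^1$.
\item The strictness in the limit is cleanest as a monotonicity statement. Set $\psi(x)=\theta_k(x)-\sqrt{k}\,(x-z_f)$. Then $\psi(z_f)=0$ and $\psi'>0$ on $(z_f,z_{f'})$. Hence, for any $x_1\in(z_f,z_{f'})$,
\[
\frac{\pi}{2}-\sqrt{k}\,(z_{f'}-z_f)=\lim_{x\to z_{f'}^-}\psi(x)>\psi(x_1)>0 .
\]
The $K$-bound is the mirror image, with $\psi'<0$.
\end{itemize}

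Phrased this way, your argument also gives the strict conclusions under the weaker hypothesis $k\le r\le K$, provided $r\not\equiv k$ (respectively $r\not\equiv K$) on the interval. The paper tacitly needs exactly this. When it applies the lemma to \eqref{example_ode}, it has only $1\le r(x)\le \frac{3}{2}$, and the upper bound $\frac{3}{2}$ is attained at isolated points.
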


\begin{lemma}\label{ch5_lemma1}\cite{Se10}
   Let $r(x)$ be a positive function in $(z_{f_1}, z_{f_2})$. 
   \begin{enumerate}
    \item If $r'(x)>0$ in $(z_{f_1}, z_{f_2})$, then $\frac{\pi}{2\sqrt{r(z_{f_2})}}< z_{f_2}-z_{f'_2} <\frac{\pi}{2\sqrt{r(z_{f'_2})}}$ and $\frac{\pi}{2\sqrt{r(z_{f'_2})}}< z_{f'_2}-z_{f_1} <\frac{\pi}{2\sqrt{r(z_{f_1})}}$.
	\item If $r'(x)<0$ in $(z_{f_1}, z_{f_2})$, then $\frac{\pi}{2\sqrt{r(z_{f'_2})}}< z_{f_2}-z_{f'_2} <\frac{\pi}{2\sqrt{r(z_{f_2})}}$ and $\frac{\pi}{2\sqrt{r(z_{f_1})}}< z_{f'_2}-z_{f_1} <\frac{\pi}{2\sqrt{r(z_{f'_2})}}$.
	\end{enumerate}
\end{lemma}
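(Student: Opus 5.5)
The plan is to run a Sturm comparison argument on the two subintervals $(z_{f_1},z_{f'_2})$ and $(z_{f'_2},z_{f_2})$ separately. On each subinterval, monotonicity of $r(x)$ bounds $r$ above and below by its endpoint values, and Lemma~\ref{ch5_lemma0} does the rest. Consider the case $r'(x)>0$ on $(z_{f_1},z_{f_2})$. Then $r$ is strictly increasing, so on the open interval between $z_{f'_2}$ and $z_{f_2}$ we have $r(z_{f'_2})<r(x)<r(z_{f_2})$. Similarly, between $z_{f_1}$ and $z_{f'_2}$ we have $r(z_{f_1})<r(x)<r(z_{f'_2})$. Both endpoint values are positive, since $r>0$ on $(z_{f_1},z_{f_2})$ and continuity gives $r\ge 0$ at the endpoints. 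For the open-interval comparison we only need $r>0$ in the interior, but Lemma~\ref{ch5_lemma0} also needs the lower bound $k$ to be positive. At $z_{f_1}$ we have $r(z_{f_1})\ge 0$. If $r(z_{f_1})=0$, the upper bound $\pi/(2\sqrt{r(z_{f_1})})$ is $+\infty$ and that part of the claim is trivial. Otherwise we take $k=r(z_{f_1})>0$.

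Next, I apply Lemma~\ref{ch5_lemma0} with $z_f=z_{f_2}$, $z_{f'}=z_{f'_2}$ (these are consecutive zeros), $k=r(z_{f'_2})$ and $K=r(z_{f_2})$. This gives $\frac{\pi}{2\sqrt{r(z_{f_2})}}<z_{f_2}-z_{f'_2}<\frac{\pi}{2\sqrt{r(z_{f'_2})}}$. Applying it again with $z_f=z_{f_1}$, $z_{f'}=z_{f'_2}$, $k=r(z_{f_1})$ and $K=r(z_{f'_2})$ gives the second pair of inequalities in part~1. The case $r'(x)<0$ is identical with the roles of the endpoint values swapped: $r$ is now decreasing, so $K$ is the value at the left endpoint of each subinterval and $k$ the value at the right endpoint. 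This yields part~2.

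The main obstacle is that Lemma~\ref{ch5_lemma0} is stated with the strict hypothesis $k<r(x)<K$, whereas monotonicity only gives $k\le r\le K$ on the closed interval. This is why the open-interval formulation above matters. Should Lemma~\ref{ch5_lemma0}'s proof need $k<r<K$ on the closed interval, I would instead prove the bounds directly from the Riccati equation \eqref{ch5_riccati}.

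For the direct proof, set $h=f/f'$ and work on $(z_{f'_2},z_{f_2})$, where $h$ runs from $-\infty$ (as $x\downarrow z_{f'_2}$) to $0$. Put $\theta(x)=\arctan(\sqrt{c}\,h(x))$ for a constant $c>0$. Then
\[
\theta'=\frac{\sqrt{c}\,(1+rh^2)}{1+ch^2}.
\]
With $c=K$ this gives $\theta'<\sqrt{K}$ strictly inside the interval, because $r<K$ there and $h\neq 0$. With $c=k$ it gives $\theta'>\sqrt{k}$. Since $\theta$ increases from $-\pi/2$ to $0$, integrating yields $\pi/2<\sqrt{K}\,(z_{f_2}-z_{f'_2})$ and $\pi/2>\sqrt{k}\,(z_{f_2}-z_{f'_2})$, which are exactly the required strict bounds. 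The same computation on $(z_{f_1},z_{f'_2})$, where $h$ runs from $0$ to $+\infty$, handles the other distance.
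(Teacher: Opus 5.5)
Your proof is correct. The paper gives no proof of its own here; the lemma is quoted from \cite{Se10}. Your reduction is the standard one. Since $r'>0$ (resp.\ $r'<0$) on the open interval makes $r$ strictly monotone on $[z_{f_1},z_{f_2}]$, the endpoint values give strict bounds $k<r(x)<K$ on each of the open intervals $(z_{f_1},z_{f'_2})$ and $(z_{f'_2},z_{f_2})$. Lemma~\ref{ch5_lemma0}, applied to these two consecutive pairs, then yields all four inequalities. Your $\arctan(\sqrt{c}\,h)$ computation is essentially the proof of Lemma~\ref{ch5_lemma0} itself, so it makes the argument self-contained and removes any worry about how ``between'' is meant there.

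One small correction concerns the degenerate case where the smaller endpoint value vanishes: $r(z_{f_1})=0$ in part~1, or $r(z_{f_2})=0$ in part~2. Calling that case trivial only disposes of the upper bound on the distance. The accompanying lower bound, e.g.\ $\frac{\pi}{2\sqrt{r(z_{f'_2})}}<z_{f'_2}-z_{f_1}$, cannot come from Lemma~\ref{ch5_lemma0}, since that lemma requires $k>0$. It does follow from your direct computation with $c=K$, which uses only $r<K$, so you should invoke it explicitly there.
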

The following lemma is an immediate consequence of Lemma \ref{ch5_lemma1}.
\begin{lemma}\label{ch5_lemma2}\cite{Se10}
	Let $r(x)$ be a positive function in $(z_{f_1}, z_{f_2})$.
	\begin{enumerate}
		\item If $r'(x)>0$ in $(z_{f_1}, z_{f_2})$, then $z_{f_1}<z_{f_2}-\frac{\pi}{\sqrt{r(z_{f_2})}}$.
		\item If $r'(x)<0$ in $(z_{f_1}, z_{f_2})$, then $z_{f_2}>z_{f_1}+\frac{\pi}{\sqrt{r(z_{f_1})}}$.
	\end{enumerate}
\end{lemma}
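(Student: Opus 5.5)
The plan is to obtain each bound by adding the two one-sided estimates of Lemma \ref{ch5_lemma1} across the intermediate zero $z_{f'_2}$ of $f'(x)$. Since $r(x)>0$, the zeros of $f$ and $f'$ interlace, so $z_{f_1}<z_{f'_2}<z_{f_2}$. This lets us write
\begin{equation*}
z_{f_2}-z_{f_1}=(z_{f_2}-z_{f'_2})+(z_{f'_2}-z_{f_1}),
\end{equation*}
and it suffices to bound each summand from below by a quantity depending only on $r$ evaluated at the endpoint appearing in the claim.

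For part 1, with $r'(x)>0$ on $(z_{f_1},z_{f_2})$, Lemma \ref{ch5_lemma1}(1) gives $z_{f_2}-z_{f'_2}>\frac{\pi}{2\sqrt{r(z_{f_2})}}$ and $z_{f'_2}-z_{f_1}>\frac{\pi}{2\sqrt{r(z_{f'_2})}}$. Since $r$ is increasing, $r(z_{f'_2})<r(z_{f_2})$, hence $\frac{\pi}{2\sqrt{r(z_{f'_2})}}>\frac{\pi}{2\sqrt{r(z_{f_2})}}$. Summing the two bounds yields $z_{f_2}-z_{f_1}>\frac{\pi}{\sqrt{r(z_{f_2})}}$, which rearranges to the claim.

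For part 2, with $r'(x)<0$, Lemma \ref{ch5_lemma1}(2) gives $z_{f'_2}-z_{f_1}>\frac{\pi}{2\sqrt{r(z_{f_1})}}$ and $z_{f_2}-z_{f'_2}>\frac{\pi}{2\sqrt{r(z_{f'_2})}}$. Since $r$ is decreasing, $r(z_{f'_2})<r(z_{f_1})$, so the second bound exceeds $\frac{\pi}{2\sqrt{r(z_{f_1})}}$. Adding gives $z_{f_2}>z_{f_1}+\frac{\pi}{\sqrt{r(z_{f_1})}}$.

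The only delicate point is the monotonicity comparison, which needs $r$ to be strictly monotone on the closed interval $[z_{f_1},z_{f_2}]$, or at least between $z_{f'_2}$ and the relevant endpoint. Because $r'$ has a fixed sign on the open interval and $r$ is continuous, the mean value theorem gives strict inequality between values at $z_{f'_2}$ and at either endpoint. Even a non-strict comparison would suffice here, since the inequalities from Lemma \ref{ch5_lemma1} are already strict.
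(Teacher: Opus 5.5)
Your proof is correct and follows the paper's route. The paper gives no separate proof and simply states that this lemma is an immediate consequence of Lemma~\ref{ch5_lemma1}; your argument supplies that step by adding the two lower bounds across $z_{f'_2}$ and using the monotonicity of $r$ to replace $r(z_{f'_2})$ by its value at the relevant endpoint.
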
	
By using Lemmas \ref{ch5_lemma1} \& \ref{ch5_lemma2}, one can get the following bounds.
\begin{lemma}\label{ch5_lemma3}
	Let $c$ be a positive constant.
	\begin{enumerate}
		\item If $r'(x)>0$ in $(z_{f_1}, z_{f_2})$ and $c\leq r(x)\leq 4c$ in $(z_{f'_2}, z_{f_2})$, then $z_{f_1}<z_{f_2}-\frac{\pi}{\sqrt{r(z_{f_2})}}<z_{f'_2}$.
		\item If $r'(x)<0$ in $(z_{f_1}, z_{f_2})$ and $c\leq r(x)\leq 4c$ in $(z_{f_1}, z_{f'_2})$, then $z_{f'_2}<z_{f_1}+\frac{\pi}{\sqrt{r(z_{f_1})}}<z_{f_2}$.
	\end{enumerate}
\end{lemma}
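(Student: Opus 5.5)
Each part has two inequalities. The outer one is exactly Lemma \ref{ch5_lemma2}, so only the inner one needs work. Part 2 mirrors part 1 with the roles of the endpoints swapped, so I describe part 1 in detail.

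\textbf{Part 1.} Since $r'(x)>0$ on $(z_{f_1},z_{f_2})$ and $r\geq c>0$ on $(z_{f'_2},z_{f_2})$, we have $r>0$ on the whole interval: $r$ is increasing, and if it were nonpositive somewhere to the left of $z_{f'_2}$ it could not oscillate, contradicting the interlacing. In fact, the hypothesis of Lemma \ref{ch5_lemma1} that $r$ is positive is implicit here. Lemma \ref{ch5_lemma2}(1) then gives $z_{f_1}<z_{f_2}-\pi/\sqrt{r(z_{f_2})}$.

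For the inner inequality $z_{f_2}-\pi/\sqrt{r(z_{f_2})}<z_{f'_2}$, it suffices to show $z_{f_2}-z_{f'_2}<\pi/\sqrt{r(z_{f_2})}$. By Lemma \ref{ch5_lemma1}(1),
\[
z_{f_2}-z_{f'_2}<\frac{\pi}{2\sqrt{r(z_{f'_2})}}.
\]
By continuity, $r(z_{f'_2})\geq c$ and $r(z_{f_2})\leq 4c$, so $r(z_{f_2})\leq 4c\leq 4r(z_{f'_2})$. Hence $\sqrt{r(z_{f_2})}\leq 2\sqrt{r(z_{f'_2})}$, which gives
\[
\frac{\pi}{2\sqrt{r(z_{f'_2})}}\leq \frac{\pi}{\sqrt{r(z_{f_2})}}.
\]
Combining the two displays yields the claim.

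\textbf{Part 2.} Symmetrically, Lemma \ref{ch5_lemma2}(2) gives $z_{f_1}+\pi/\sqrt{r(z_{f_1})}<z_{f_2}$. For the inner inequality, Lemma \ref{ch5_lemma1}(2) gives $z_{f'_2}-z_{f_1}<\pi/(2\sqrt{r(z_{f'_2})})$. Since $r(z_{f_1})\leq 4c\leq 4r(z_{f'_2})$, we get $\pi/(2\sqrt{r(z_{f'_2})})\leq \pi/\sqrt{r(z_{f_1})}$, hence $z_{f'_2}<z_{f_1}+\pi/\sqrt{r(z_{f_1})}$.

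\textbf{Main obstacle.} The only delicate point is applying the bound $c\leq r\leq 4c$ at the endpoints $z_{f_2}$ and $z_{f'_2}$, since the hypothesis is stated on an open interval. Continuity of $r$ extends it to the closed interval, so the endpoint values are covered. The strictness of the final inequality comes from the strict inequality in Lemma \ref{ch5_lemma1}.
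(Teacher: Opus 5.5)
Your argument is the paper's: both get the outer inequality from Lemma~\ref{ch5_lemma2}, and the inner one from the bound $z_{f_2}-z_{f'_2}<\pi/(2\sqrt{r(z_{f'_2})})$ of Lemma~\ref{ch5_lemma1}. The paper passes through the intermediate quantity $\pi/(2\sqrt{c})$, while you compare $r(z_{f_2})\le 4c\le 4r(z_{f'_2})$ directly; both rest on the same endpoint inequalities $r(z_{f'_2})\ge c$ and $r(z_{f_2})\le 4c$.

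One correction: your claim that positivity of $r$ on all of $(z_{f_1},z_{f_2})$ follows from monotonicity and interlacing is false. For example, take $f''+xf=0$ with $f(-1)=0$. Here $r(x)=x<0$ on $(-1,0)$, yet $f$ oscillates for $x>0$, so $z_{f'_2}<z_{f_2}$ still exist and interlace. Drop that argument and take positivity as the standing hypothesis under which Lemmas~\ref{ch5_lemma1} and~\ref{ch5_lemma2} are invoked, as the paper implicitly does.
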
	
\begin{proof}
	By using part \textit{(1)} of Lemma \ref{ch5_lemma1}, we have $z_{f_2}-z_{f'_2}<\frac{\pi}{2\sqrt{r(z_{f'_2})}}\leq \frac{\pi}{2\sqrt{c}}.$ Consequently, 
	\begin{equation}\label{ch5_eq2}
		z_{f_2}-\frac{\pi}{2\sqrt{c}}< z_{f'_2}.
	\end{equation}
	From Lemma \ref{ch5_lemma2} we have, $z_{f_1}<z_{f_2}-\frac{\pi}{\sqrt{r(z_{f_2})}}$. Thus, using \eqref{ch5_eq2}, we conclude that $z_{f_1}<z_{f_2}-\frac{\pi}{\sqrt{r(z_{f_2})}}\leq z_{f_2}-\frac{\pi}{2\sqrt{c}}<z_{f'_2}.$ Proof for part \textit{(2)} follows similarly.
\end{proof}

\begin{remark}\label{ch5_remark1}
	From \eqref{ch5_riccati}, it is evident that $h'(z_{f_1})=1$. As a result, $h(x)<0$ for all $x$ in the interval $(z_{f'_1}, z_{f_1})$ and $h(x)>0$ for all $x$ in the interval $(z_{f_1}, z_{f'_2})$.
\end{remark}

\section{\bf Derivation and nonlocal convergence of modified Halley's method}
This section presents the derivation of the proposed modified Halley's method and studies its convergence analysis. For the rest of the discussion in this section, $f(x)$ denotes the nontrivial solution of the second-order differential equation \eqref{ch5_f_ode} and $h(x)$ denotes the ratio $\frac{f(x)}{f'(x)}$. Consequently, $h(x)$ satisfies the Riccati differential equation \eqref{ch5_riccati}. Let $z_{f_1}$ be the zero of $h(x)$. The integration of the Riccati equation \eqref{ch5_riccati} from $z^*$ to $x$ yields the following expression: 
\begin{equation*}
    h(x) = \int_{z^*}^{x} (1+r(t)h^2(t)){\rm d}t.
\end{equation*}
By approximating the above integral using the trapezoidal rule, one obtains that
\begin{align*}
		h(x) &\approx \frac{1}{2}(x-z^*)(2+r(x)h^2(x))\\
		z^* &\approx x-\frac{2h(x)}{2+r(x)h^2(x)}.
\end{align*}
If $x$ is an approximation for $z^*$, then one can expect that
\begin{equation}\label{ch5_halley}
		g(x)=x-\frac{2h(x)}{2+r(x)h^2(x)}
\end{equation}
will provide a better approximation of $z^*$. Thus, we derive an iterative scheme defined by the equation $x_{n+1} = g(x_n)$, i.e., 
\begin{equation}\label{ch5_scheme}
		x_{n+1}=x_n-\dfrac{2h(x_n)}{2+r(x_n)h^2(x_n)}, \hspace{2mm} n=0, 1, 2, \dots
\end{equation}
for approximating zeros of $h(x)$. It is straightforward to verify that $g(z^*)=z^*$, $g'(z^*)=0$, $g''(z^*)=0$, and $g'''(z^*)\neq 0$. For $n \in \mathbb{N}$, define $e_n=x_n-z^*$. Consequently,
\begin{equation*}
    e_{n+1}=\dfrac{g'''(z^*)}{6}e^3_{n}+\mathcal{O}(e_n^4).
\end{equation*}
Thus, the iterative scheme \eqref{ch5_scheme} exhibits a third-order convergence property. Let $a\neq 0$ be a real constant. Now consider the function 
\begin{equation}\label{modified_scheme}
    G(x) = x - \frac{2h(x)}{2+ah^2(x)}
\end{equation}
and the corresponding iterative scheme $x_{n+1}=G(x_n)$. It is easy to verify that $G(z^*)=z^*$, $G'(z^*)=0$, $G''(z^*)=0$, and $G'''(z^*)\neq 0$. For $n \in \mathbb{N}$, define $e'_n=x_n-z^*$. Consequently, 
\begin{equation*}
  e'_{n+1}=\dfrac{G'''(z^*)}{6}{e'_{n}}^3+\mathcal{O}({e'_n}^4).  
\end{equation*}
Thus, the corresponding iterative scheme also exhibits third-order convergence. For the initial guess $x_0$, the choice $a=r(x_0)$ leads to a computationally economical iterative method.
\begin{equation}\label{mscheme}
		x_{n+1}=x_n-\dfrac{2h(x_n)}{2+r(x_0)h^2(x_n)}, \hspace{2mm} n=0, 1, 2, \cdots.
\end{equation}
The following remark presents the relationship between the proposed iterative schemes \eqref{ch5_scheme} and \eqref{mscheme} and the well-known Halley's method. 
\begin{remark}
	Using the definition of $h(x)$ and the relation $r(x) = -\frac{f''(x)}{f(x)}$ in the iterative method described in \eqref{ch5_scheme} for finding the zero of $h(x)$, it transforms into Halley's method for finding the zero of $f(x)$, i.e.,
\begin{equation*}
    x_{n+1} = x_n - \frac{2f(x_n)f'(x_n)}{2(f'(x_n))^2-f''(x_n)f(x_n)}, \hspace{2mm} n = 0, 1, 2, \dots .
\end{equation*}
Similarly, using the definition of $h(x)$ and the relation $r(x_0) = -\frac{f''(x_0)}{f(x_0)}$ in the iterative method described in \eqref{mscheme} for finding the zero of $h(x)$, it transforms into a iterative method for finding the zero of $f(x)$, i.e.,
\begin{equation*}
    x_{n+1} = x_n - \frac{2f(x_n)f(x_0)f'(x_n)}{2f(x_0)(f'(x_n))^2-f''(x_0)f(x_n)}, \hspace{2mm} n = 0, 1, 2, \dots .
\end{equation*}
It is interesting to note that the above iterative scheme requires the evaluation of the double derivative only once, unlike Halley's method. Beyond the first iteration, it requires the same function evaluations as Newton's method. We call the iterative scheme in \eqref{mscheme} as \textit{modified Halley's method}. 
\end{remark}
\subsection{Convergence Analysis - Modified Halley's Method}
Let $z_{f'_1}< z_{f'_2}$ be two consecutive zeros of $f'(x)$. Let $z_{f_1}$ be the unique zero of $f(x)$ in the interval $\mathcal{I}=(z_{f'_1}, z_{f'_2})$. Throughout this manuscript, $\tilde{G}(x)$ denotes the iteration function $x-\frac{2h(x)}{2+r(x_0)h^2(x)}$ of the modified Halley's method. Hereafter, this section discusses the nonlocal convergence theorem for the proposed modified Halley's method within the interval $\mathcal{I}$. More specifically, this section uses the behaviour of the coefficient function $r(x)$ to derive the nonlocal convergence behaviour of the proposed iterative scheme $\eqref{mscheme}$. Assuming that the function $r(x)$ is monotonic, the following theorem establishes the non-local convergence of the modified iterative scheme \eqref{mscheme} within the interval $\mathcal{I}$.
\begin{theorem}\label{ch5_thm11}
Let $r(x)$ be a positive function in the interval $\mathcal{I}$.
	\begin{enumerate}
		\item Let $r(x)$ be non-increasing in the interval $(z_{f'_1}, z_{f_1})$. Then, the modified Halley's method \eqref{mscheme} exhibits monotonic convergence to $z_{f_1}$ for any initial guess $x_0$ within the interval $(z_{f'_1}, z_{f_1})$.
        \item Let $r(x)$ be non-decreasing in the interval $(z_{f_1}, z_{f'_2})$. Then, the modified Halley's method \eqref{mscheme} exhibits monotonic convergence to $z_{f_1}$ for any initial guess $x_0$ within the interval $(z_{f_1}, z_{f'_2})$.
	\end{enumerate}
\end{theorem}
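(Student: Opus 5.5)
The plan is to prove part \textit{(1)} directly and obtain part \textit{(2)} by the mirror-image argument. Write $z=z_{f_1}$ and $a=r(x_0)>0$, and take $x_0\in(z_{f'_1},z)$. By Remark \ref{ch5_remark1}, $h(x)<0$ on $(z_{f'_1},z)$. Since $a>0$, the denominator $2+ah^2(x)$ is positive, so $\tilde G(x)-x=-2h(x)/(2+ah^2(x))>0$ for every $x\in[x_0,z)$. Hence each step moves to the right. The whole proof then reduces to one claim: for every $x\in[x_0,z)$,
\begin{equation*}
x<\tilde G(x)<z .
\end{equation*}
Once this holds, induction gives $x_0<x_1<\dots<z$. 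A monotone bounded sequence has a limit $L\in(x_0,z]$. Since $\tilde G$ is continuous on $[x_0,z]$, $L$ is a fixed point of $\tilde G$, which forces $h(L)=0$ and hence $L=z$.

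To prove $\tilde G(x)<z$, I would compare the Riccati equation \eqref{ch5_riccati} with the constant-coefficient equation $h'=1+ah^2$. Fix $x\in[x_0,z)$. Because $r$ is non-increasing on $(z_{f'_1},z)$, for $t\in[x,z]$ we have $r(t)\le r(x_0)=a$, using continuity of $r$ at $z$. Therefore $h'(t)\le 1+ah^2(t)$ on $[x,z]$. Equivalently,
\begin{equation*}
\frac{d}{dt}\,\frac{\arctan(\sqrt a\,h(t))}{\sqrt a}=\frac{h'(t)}{1+ah^2(t)}\le 1 .
\end{equation*}
Integrating from $x$ to $z$ and using $h(z)=0$ gives
\begin{equation*}
z-x\ \ge\ \frac{\theta}{\sqrt a},\qquad \theta:=\arctan\!\big(\sqrt a\,|h(x)|\big)\in(0,\tfrac{\pi}{2}).
\end{equation*}
Next put $u=\sqrt a|h(x)|=\tan\theta$. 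The step length becomes
\begin{equation*}
\tilde G(x)-x=\frac{2|h(x)|}{2+ah^2(x)}=\frac{1}{\sqrt a}\,\frac{2\tan\theta}{2+\tan^2\theta}=\frac{1}{\sqrt a}\,\frac{\sin 2\theta}{1+\cos^2\theta}.
\end{equation*}
So it suffices to prove the elementary inequality $\psi(\theta):=\theta(1+\cos^2\theta)-\sin2\theta>0$ on $(0,\pi/2)$. We have $\psi(0)=0$, and a short computation gives
\begin{equation*}
\psi'(\theta)=3\sin^2\theta-2\theta\sin\theta\cos\theta=\sin\theta\,(3\sin\theta-2\theta\cos\theta).
\end{equation*}
This is positive on $(0,\pi/2)$ because $\tan\theta>\theta>\tfrac23\theta$. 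Combining the pieces gives $\tilde G(x)-x<\theta/\sqrt a\le z-x$, which is the claim. The inequality is strict even if $r$ is constant.

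The main obstacle is the comparison step. The paper's earlier bounds (Lemmas \ref{ch5_lemma0}--\ref{ch5_lemma3}) only relate zeros of $f$ and $f'$, not an arbitrary point $x$ to $z$. The proof therefore hinges on the arctangent substitution and on the monotonicity of $r$, which guarantees $r\le r(x_0)$ on every later interval $[x_n,z]$. The trigonometric inequality for $\psi$ is then routine.

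For part \textit{(2)}, take $x_0\in(z,z_{f'_2})$. Here $h>0$, so the iterates decrease. Since $r$ is non-decreasing on $(z,z_{f'_2})$, for $x\le x_0$ and $t\in[z,x]$ we again have $r(t)\le r(x_0)=a$. Integrating the same arctangent inequality from $z$ to $x$ gives $x-z\ge\theta/\sqrt a$ with $\theta=\arctan(\sqrt a\,h(x))$. The same inequality $\psi>0$ then yields $z<\tilde G(x)<x$, and the monotone-limit argument concludes as before.
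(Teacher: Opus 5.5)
Your argument is correct, and it takes a genuinely different route from the paper's.

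The paper differentiates the iteration function to get \eqref{ch5_modified_derivative}. On $(x_0,z_{f_1})$ every term of that numerator is positive or nonnegative, because $r(x)\le r(x_0)$ and $h'=1+rh^2>0$, so $\tilde G$ is strictly increasing there. The paper then applies the mean value theorem with $\tilde G(z_{f_1})=z_{f_1}$ to get $z_{f_1}-x_{n+1}=\tilde G'(c)(z_{f_1}-x_n)>0$. The forward step $x_n<x_{n+1}$ comes from the sign of $h$, exactly as in yours.

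You never differentiate $\tilde G$. You compare the Riccati equation with its frozen-coefficient version $h'=1+r(x_0)h^2$. This bounds the remaining distance from below by $\arctan(\sqrt a\,|h(x)|)/\sqrt a$, which is the exact distance when $r\equiv a$. You then show that the modified Halley step, in the scaled variable $u=\sqrt a\,|h|$, is the rational underestimate $2u/(2+u^2)<\arctan u$.

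What each route buys:
\begin{itemize}
\item The paper's route is shorter, and its single derivative formula is reused directly for Theorems \ref{coupled_theorem_1} and \ref{ch5_thm22}.
\item Your route explains \emph{why} the iteration cannot overshoot, and it gives an explicit margin $z_{f_1}-\tilde G(x)\ge\bigl(\theta-\sin 2\theta/(1+\cos^2\theta)\bigr)/\sqrt a$.
\item Your route also adapts well. If you replace $r(x_0)$ by an upper bound $c_1$ for $r$ and use $\arctan u>3u/(3+u^2)$ for $u>0$, you recover exactly the condition $c_1/c_2\le 3/2$ of Theorem \ref{ch5_thm22}.
\end{itemize}
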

\begin{proof}
\textit{(1)}. For all $x \in (z_{f'_1}, z_{f_1})$, it is evident that $2+r(x_0)h^2(x)>0$. Hence, the proposed modified Halley's method \eqref{mscheme} is well-defined for $x$ in $(z_{f'_1}, z_{f_1})$. In addition,
\begin{equation}\label{ch5_modified_derivative}
		\tilde{G}'(x) = \dfrac{4h^2(x)(r(x_0)-r(x))+r^2(x_0)h^4(x)+2r(x_0)h^2(x)h'(x)}{(2+r(x_0)h^2(x))^2}, ~\forall~ x \in \mathcal{I}.
\end{equation}
As $r(x)$ is non-increasing, it follows that $\tilde{G}'(x)>0$ for all $x \in (x_0, z_{f_1})$. Clearly $x_1=\tilde{G}(x_0)$ exists. Using Remark \ref{ch5_remark1}, it can be concluded that $x_0<x_1$. Note that
\begin{equation*}
z_{f_1}-x_1=\tilde{G}(z_{f_1})-\tilde{G}(x_0)=\tilde{G}'(c)(z_{f_1}-x_0)>0, \quad \text{for some }\quad c \in (x_0, z_{f_1}).
\end{equation*}
Hence, $x_1<z_{f_1}$. Thus, $x_0<x_1<z_{f_1}$. Assume that $x_2, x_3, \cdots, x_n$ exist and satisfying the relations $x_0<x_1 < x_2 < x_3 < \cdots < x_n < z_{f_1}$. Clearly, $x_{n+1}=\tilde{G}(x_n)$ exists. Remark \ref{ch5_remark1} ensures that $x_n<x_{n+1}$. Furthermore,  
\begin{equation*}
  z_{f_1}-x_{n+1}=\tilde{G}(z_{f_1})-\tilde{G}(x_n)=\tilde{G}'(c)(z_{f_1}-x_n)>0, \quad\text{for some }\quad c \in (x_{n}, z_{f_1}).  
\end{equation*}
Hence, $x_{n+1}<z_{f_1}$. Thus, the sequence $(x_n)$ obtained by the scheme \eqref{mscheme} is monotonically increasing and bounded above by $z_{f_1}$. Hence, the sequence $(x_n)$ generated by the modified Halley's method \eqref{mscheme} converges. One can verify that the limit of $x_n$ is $z_{f_1}$. The proof for the second part is left over because it is similar.
\end{proof}
The above theorem guarantees the convergence of modified Halley's method either in $(z_{f'_1}, z_{f_1})$ or in $(z_{f_1}, z_{f'_2})$. The subsequent theorem ensures the convergence of modified Halley's method in the interval $\mathcal{I}=(z_{f'_1}, z_{f'_2})$ through a suitable alteration of the scheme. 
\begin{theorem}\label{coupled_theorem_1}
Let $r(x)$ be a positive and non-increasing function in the interval $\mathcal{I}$. Let $x_0,y_0 \in \mathcal{I}$ such that $x_0<z_{f_1}<y_0$. Let $(x_n)$ and $(y_n)$ be sequences generated as follows:
\begin{eqnarray}
\label{coupled_1}
\begin{cases}
x_{n+1} = x_n - \frac{2h(x_n)}{2 + r(x_0)h^2(x_n)},\\
y_{n+1} = y_n - \frac{2h(y_n)}{2 + r(x_0)h^2(y_n)}.
\end{cases}
\end{eqnarray}
Then the sequences $(x_n)$ and $(y_n)$ converge monotonically to the unique zero $z_{f_1}$ and satisfy the following relation:
\begin{equation}\label{ch5_relation_1}
        x_0 < x_1 < x_2 < \cdots < x_n < z_{f_1} < y_n < \cdots < y_2 < y_1 < y_0.
\end{equation}    
\end{theorem}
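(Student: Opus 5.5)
The plan is to run the argument of Theorem \ref{ch5_thm11} twice, once on each side of $z_{f_1}$. The key observation is that both sequences in \eqref{coupled_1} use the same constant $a=r(x_0)$, with $x_0<z_{f_1}$. Since $r$ is non-increasing on $\mathcal{I}$, we have $r(x_0)\ge r(x)$ for every $x\in[x_0,z_{f'_2})$, and this range contains all points $x_n$ and $y_n$ we expect to meet. Substituting this into the derivative formula \eqref{ch5_modified_derivative} gives
\begin{equation*}
\tilde{G}'(x)=\dfrac{4h^2(x)(r(x_0)-r(x))+r^2(x_0)h^4(x)+2r(x_0)h^2(x)h'(x)}{(2+r(x_0)h^2(x))^2}.
\end{equation*}
Here $h'(x)=1+r(x)h^2(x)>0$ by \eqref{ch5_riccati}, so every term in the numerator is nonnegative. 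The numerator is strictly positive whenever $h(x)\neq 0$, and also $2+r(x_0)h^2(x)>0$ everywhere on $\mathcal{I}$. Hence $\tilde{G}$ is well defined on $[x_0,z_{f'_2})$ and strictly increasing there, since $\tilde{G}'\ge 0$ with equality only at the isolated point $z_{f_1}$.

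For the sequence $(x_n)$, the argument is exactly part (1) of Theorem \ref{ch5_thm11}. If $x_n\in[x_0,z_{f_1})$, then $h(x_n)<0$ by Remark \ref{ch5_remark1}, so $x_{n+1}>x_n$. By the mean value theorem, $z_{f_1}-x_{n+1}=\tilde{G}'(c)(z_{f_1}-x_n)>0$ for some $c\in(x_n,z_{f_1})$. Induction then gives an increasing sequence bounded above by $z_{f_1}$. (Part (1) of Theorem \ref{ch5_thm11} assumed monotonicity only on $(z_{f'_1},z_{f_1})$; here it holds on all of $\mathcal{I}$, so it applies.)

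For the sequence $(y_n)$, I argue by induction, assuming $y_n\in(z_{f_1},y_0]\subset(z_{f_1},z_{f'_2})$. Remark \ref{ch5_remark1} gives $h(y_n)>0$, hence $y_{n+1}<y_n$. The mean value theorem gives $y_{n+1}-z_{f_1}=\tilde{G}(y_n)-\tilde{G}(z_{f_1})=\tilde{G}'(c)(y_n-z_{f_1})$ with $c\in(z_{f_1},y_n)$. This is positive because $r(x_0)-r(c)\ge 0$ and $c>x_0$. This step is where the obstacle lies. Theorem \ref{ch5_thm11}(2) would require $r$ non-decreasing on the right side with constant $r(y_0)$. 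That is the wrong direction, so the proof needs the choice $a=r(x_0)$ taken from the left endpoint, together with the non-increasing hypothesis, to control the right-hand side. I would emphasize that $x_0<c$ for every relevant $c$ is exactly what makes $\tilde{G}'\ge 0$ hold there. Thus $(y_n)$ is decreasing and bounded below by $z_{f_1}$.

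Finally, both sequences converge by monotone convergence. Each limit $L$ lies in $[x_0,y_0]\subset\mathcal{I}$, where $\tilde{G}$ is continuous, so $L=\tilde{G}(L)$. This forces $2h(L)/(2+r(x_0)h^2(L))=0$, that is, $h(L)=0$. Since $z_{f_1}$ is the unique zero of $h$ in $\mathcal{I}$, we get $L=z_{f_1}$. The chain of inequalities \eqref{ch5_relation_1} is then simply the concatenation of the two inductive statements.
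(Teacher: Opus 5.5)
Your proposal is correct and follows the same route as the paper. For $(x_n)$ you invoke Theorem~\ref{ch5_thm11}(1); for $(y_n)$ you run the same induction with Remark~\ref{ch5_remark1} and the mean value theorem, using $\tilde{G}'>0$ on $(z_{f_1},y_0)$. You also make explicit two points the paper leaves to the reader: $\tilde{G}'(c)>0$ there because $c>x_0$ gives $r(x_0)\ge r(c)$ in \eqref{ch5_modified_derivative}, and each limit is a fixed point of $\tilde{G}$ in $\mathcal{I}$, hence a zero of $h$, hence equal to $z_{f_1}$.
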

\begin{proof}
The monotone convergence of the sequence $(x_n)$ follows directly from Theorem \ref{ch5_thm11}. The function $\tilde{G}(y) = y-\frac{2h(y)}{2+r(x_0)h^2(y)}$ is well defined in the interval $(z_{f_1},z_{f'_2})$ and $\tilde{G}'(y)>0$ in the interval $(z_{f_1},y_0)$. Clearly $y_1=\tilde{G}(y_0)$ exists. Using Remark \ref{ch5_remark1}, it can be concluded that $y_1<y_0$. Note that
\begin{equation*}
    y_1-z_{f_1} = \tilde{G}(y_0)-\tilde{G}(z_{f_1}) = \tilde{G}'(c)(y_0-z_{f_1})>0~~~\text{for some}~~c \in (z_{f_1}, y_0).
\end{equation*}
Thus, $z_{f_1}<y_1$. Hence, $z_{f_1}<y_1<y_0$. Assume that $y_2, y_3, \cdots, y_n$ exist and satisfying the relations $y_0>y_1 > y_2 > y_3 > \cdots > y_n > z_{f_1}$. Clearly, $y_{n+1}=\tilde{G}(y_n)$ exists. Remark \ref{ch5_remark1} ensures that $y_n>y_{n+1}$. Furthermore,  
\begin{equation*}
    y_{n+1}-z_{f_1} = \tilde{G}(y_{n})-\tilde{G}(z_{f_1}) = \tilde{G}'(c)(y_{n}-z_{f_1})>0~~~\text{for some}~~c \in (z_{f_1}, y_{n}).
\end{equation*}
Hence, $y_{n+1}>z_{f_1}$. Thus, the sequence $(y_n)$ obtained by the scheme \eqref{coupled_1} is monotonically decreasing and bounded below by $z_{f_1}$. Hence, the sequence $(y_n)$ generated by the modified Halley's method \eqref{coupled_1} converges. One can verify that the limit of $y_n$ is $z_{f_1}$.     
\end{proof}
\begin{theorem}
Let $r(x)$ be a positive and non-decreasing function in the interval $\mathcal{I}$. Let $x_0,y_0 \in \mathcal{I}$ such that $x_0<z_{f_1}<y_0$. Let $(x_n)$ and $(y_n)$ be sequences generated as follows:
\begin{eqnarray}
\label{coupled_2}
\begin{cases}
x_{n+1} = x_n - \frac{2h(x_n)}{2 + r(y_0)h^2(x_n)},\\
y_{n+1} = y_n - \frac{2h(y_n)}{2 + r(y_0)h^2(y_n)}.
\end{cases}
\end{eqnarray}
Then the sequences $(x_n)$ and $(y_n)$ converge monotonically to the unique zero $z_{f_1}$ and satisfy the relation \eqref{ch5_relation_1}.
\end{theorem}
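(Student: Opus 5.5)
The argument mirrors Theorem \ref{coupled_theorem_1} with the roles of the two sides exchanged. Throughout, the iteration function is $\tilde{G}(x)=x-\frac{2h(x)}{2+r(y_0)h^2(x)}$, with the constant $a=r(y_0)>0$. Its derivative is the analogue of \eqref{ch5_modified_derivative} with $r(x_0)$ replaced by $r(y_0)$:
\begin{equation*}
\tilde{G}'(x)=\frac{4h^2(x)\bigl(r(y_0)-r(x)\bigr)+r^2(y_0)h^4(x)+2r(y_0)h^2(x)h'(x)}{\bigl(2+r(y_0)h^2(x)\bigr)^2}.
\end{equation*}
This follows from $h'=1+rh^2$. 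Because $r(y_0)>0$, the denominator $2+r(y_0)h^2(x)$ is positive on all of $\mathcal{I}$, so $\tilde{G}$ is well defined there. The plan is to prove the key fact $\tilde{G}'>0$ on $(z_{f_1},y_0]$ and on $(x_0,z_{f_1})$, and then run the same mean-value induction as before.

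\emph{The $y$-sequence.} Since $r$ is non-decreasing, $r(x)\le r(y_0)$ for every $x\le y_0$, so the first term of the numerator is nonnegative. The middle term is nonnegative trivially. For the last term, $h'(x)=1+r(x)h^2(x)>0$ because $r>0$. Hence $\tilde{G}'(x)>0$ on $(z_{f_1},y_0]$; strict positivity holds because $h\neq 0$ away from $z_{f_1}$. By Remark \ref{ch5_remark1}, $h(y_n)>0$, which gives $y_{n+1}<y_n$. The mean value theorem gives $y_{n+1}-z_{f_1}=\tilde{G}'(c)(y_n-z_{f_1})>0$. Induction then yields a decreasing sequence bounded below by $z_{f_1}$. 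Note that we only ever evaluate $\tilde{G}'$ at points below $y_0$, which is exactly where the comparison $r(x)\le r(y_0)$ holds. This is the reason the constant is $r(y_0)$ rather than $r(x_0)$.

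\emph{The $x$-sequence.} For $x\in(x_0,z_{f_1})$ we have $x<z_{f_1}<y_0$, so again $r(x)\le r(y_0)$ and the same argument gives $\tilde{G}'>0$. Remark \ref{ch5_remark1} gives $h(x_n)<0$, hence $x_{n+1}>x_n$, and the mean value theorem on $(x_n,z_{f_1})$ gives $x_{n+1}<z_{f_1}$. By induction the $x$-sequence is increasing and bounded above by $z_{f_1}$. Combining the two inductions gives the chain \eqref{ch5_relation_1}.

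\emph{Identifying the limits.} Both sequences are monotone and bounded, so they converge, say to $L\in[x_0,z_{f_1}]$ and $M\in[z_{f_1},y_0]$. Since $\tilde{G}$ is continuous on $\mathcal{I}$, each limit is a fixed point of $\tilde{G}$. A fixed point satisfies $h=0$, so it must be $z_{f_1}$, the unique zero of $h$ in $\mathcal{I}$.

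I expect no real obstacle. The only point needing care is checking that every point at which $\tilde{G}'$ is evaluated lies at or below $y_0$, so that the monotonicity comparison $r(x)\le r(y_0)$ applies. This is what makes the choice $a=r(y_0)$ work for both sequences simultaneously.
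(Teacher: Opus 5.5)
Your proof is correct and takes the same approach the paper intends; its proof is omitted there as a mirror of Theorem~\ref{coupled_theorem_1}. You get $\tilde{G}'>0$ from the comparison $r(x)\le r(y_0)$ for $x\le y_0$, take the step direction from the sign of $h$ in Remark~\ref{ch5_remark1}, and close with the mean value theorem induction; the only cosmetic difference is that the paper would cite part (2) of Theorem~\ref{ch5_thm11} for the $y$-sequence (whose constant $r(y_0)$ is its own starting value), whereas you redo that induction explicitly and also spell out why the limits equal $z_{f_1}$.
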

\begin{proof}
    The proof is similar to the proof of Theorem \ref{coupled_theorem_1}, and hence it is omitted.
\end{proof}
The theorems presented above establish nonlocal convergence results for the proposed modified Halley's method, assuming that $r(x)$ is a monotone function. The following theorem provides sufficient conditions for relaxing the monotonicity of the function $r(x)$ and ensures the nonlocal convergence property for the modified Halley's method in the entire interval $\mathcal{I}=(z_{f'_1},z_{f'_2})$.
\begin{theorem}\label{ch5_thm22}
	Let $c_1$ and $c_2$ be positive constants such that $c_2 < r(x) < c_1$ in the interval $\mathcal{I}$ and $\frac{c_1}{c_2}\leq \frac{3}{2}$. The proposed modified Halley's method converges monotonically to the solution $z_{f_1}$ for any initial guess $x_0$ in the interval $\mathcal{I}$.
\end{theorem}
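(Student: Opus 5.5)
The plan is to reuse the mean-value argument from the proof of Theorem \ref{ch5_thm11}. That argument needs only one ingredient: $\tilde{G}'(x)>0$ on the segment between $x_0$ and $z_{f_1}$. Here I will obtain this positivity on all of $\mathcal{I}$ from the pinching condition $c_1/c_2\le 3/2$, in place of monotonicity of $r(x)$. Set $a=r(x_0)$, so $c_2<a<c_1$. Since $a>0$, the denominator $2+ah^2(x)$ is positive on $\mathcal{I}$, so $\tilde{G}$ is well defined and smooth there.

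The key step is to rewrite the numerator in \eqref{ch5_modified_derivative} using the Riccati equation \eqref{ch5_riccati}, i.e. substitute $h'(x)=1+r(x)h^2(x)$. The numerator becomes
\begin{equation*}
4h^2(a-r)+a^2h^4+2ah^2\bigl(1+rh^2\bigr)=h^2\Bigl[\,6a-4r(x)+a\,h^2\bigl(a+2r(x)\bigr)\Bigr].
\end{equation*}
The term $a h^2(a+2r)$ is nonnegative. For the remaining term, the bounds $a>c_2$ and $r(x)<c_1\le \tfrac32 c_2$ give
\begin{equation*}
6a-4r(x)>6c_2-6c_2=0 .
\end{equation*}
Hence $\tilde{G}'(x)>0$ for every $x\in\mathcal{I}$ with $h(x)\neq 0$, that is, for all $x\in\mathcal{I}\setminus\{z_{f_1}\}$. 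This is exactly where the constant $3/2$ enters, and it is the only nonroutine step. The main thing to check is that this algebra is done carefully, so that no hidden sign assumption on $r(x_0)-r(x)$ is needed.

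With positivity in hand, I split into the cases $x_0\in(z_{f'_1},z_{f_1})$ and $x_0\in(z_{f_1},z_{f'_2})$; the case $x_0=z_{f_1}$ is trivial.

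\emph{Case $x_0<z_{f_1}$.} By Remark \ref{ch5_remark1}, $h(x_n)<0$, so $x_{n+1}>x_n$. By the mean value theorem,
\begin{equation*}
z_{f_1}-x_{n+1}=\tilde{G}'(\xi)\,(z_{f_1}-x_n)>0 \quad\text{for some } \xi\in(x_n,z_{f_1}),
\end{equation*}
so $x_{n+1}<z_{f_1}$. Induction then gives an increasing sequence bounded above by $z_{f_1}$.

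\emph{Case $x_0>z_{f_1}$.} The same argument applies with $h>0$ and all inequalities reversed, giving a decreasing sequence bounded below by $z_{f_1}$, exactly as in the proof of Theorem \ref{coupled_theorem_1}.

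In both cases the iterates stay inside the subinterval $[x_0,z_{f_1}]$ (or $[z_{f_1},x_0]$), which lies in $\mathcal{I}$. Hence the limit $L$ lies in $\mathcal{I}$, where $\tilde{G}$ is continuous, and $L=\tilde{G}(L)$. The fixed-point equation forces $2h(L)/(2+ah^2(L))=0$, so $h(L)=0$. Since $z_{f_1}$ is the unique zero of $h$ in $\mathcal{I}$, we conclude $L=z_{f_1}$.
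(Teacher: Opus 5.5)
Your proposal is correct and follows the paper's proof essentially verbatim. The paper likewise bounds the numerator of \eqref{ch5_modified_derivative} from below, using $h'(x)\ge 1$ rather than the exact Riccati substitution, to get $\tilde{G}'(x)>h^2(x)\bigl(c_2^2h^2(x)+6c_2-4c_1\bigr)/(2+r(x_0)h^2(x))^2\ge 0$, and then runs the same mean-value induction. Your identification of the limit, via the fixed-point equation and the uniqueness of the zero of $h$ in $\mathcal{I}$, supplies the step the paper leaves as ``one can verify.''
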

\begin{proof}
Clearly, the function $\tilde{G}(x)$ is well-defined in the interval $\mathcal{I}$. Using the fact that $h'(x)\geq 1$ and $-r(x)>-c_1$ for all $x\in \mathcal{I}$ in Eq.\eqref{ch5_modified_derivative}, one can get
\begin{equation*}
 \tilde{G}'(x) > \dfrac{h^2(x)(c_2^2h^2(x)+6c_2-4c_1)}{(2+r(x_0)h^2(x))^2} \geq 0 \quad \forall \hspace{2mm} x \in \mathcal{I}.   
\end{equation*}
Let $x_0 \in (z_{f'_1},z_{f_1})$. Hence, $x_1=\tilde{G}(x_0)$ exists. Using Remark \ref{ch5_remark1}, it can be concluded that $x_0<x_1$. Note that
\begin{equation*}
	z_{f_1} - x_1 = \tilde{G}(z_{f_1}) - \tilde{G}(x_0) = \tilde{G}'(c) (z_{f_1} - x_0) > 0 \hspace{2mm}\text{for some} \hspace{2mm} c \in (x_0, z_{f_1}).
\end{equation*}
Hence, $x_1<z_{f_1}$. Thus, $x_0<x_1<z_{f_1}$. Assume that $x_2, x_3, \cdots, x_n$ exist and satisfying the relations $x_0<x_1 < x_2 < x_3 < \cdots < x_n < z_{f_1}$. Clearly, $x_{n+1}=\tilde{G}(x_n)$ exists. Remark \ref{ch5_remark1} ensures that $x_n<x_{n+1}$. Furthermore,  
\begin{equation*}
  z_{f_1}-x_{n+1}=\tilde{G}(z_{f_1})-\tilde{G}(x_n)=\tilde{G}'(c)(z_{f_1}-x_n)>0, \quad\text{for some }\quad c \in (x_{n}, z_{f_1}).  
\end{equation*}
Hence, $x_{n+1}<z_{f_1}$. Thus, the sequence $(x_n)$ obtained by the scheme \eqref{mscheme} is monotonically increasing and bounded above by $z_{f_1}$. Hence, the sequence $(x_n)$ generated by the modified Halley's method \eqref{mscheme} converges. One can verify that the limit of $x_n$ is $z_{f_1}$. The proof is similar if $x_0\in (z_{f_1},z_{f'_2})$ and hence it is omitted.
\end{proof}
\subsection{Convergence Analysis - Halley's Method}
Let $z_{f'_1}< z_{f'_2}$ be two consecutive zeros of $f'(x)$. Let $z_{f_1}$ be the unique zero of $f(x)$ in the interval $\mathcal{I}=(z_{f'_1}, z_{f'_2})$. This subsection presents the nonlocal convergence results for Halley's method in the interval $\mathcal{I}$, based on the behaviour of the coefficient function. Throughout this manuscript, $g(x)$ denotes the iteration function $x-\frac{2h(x)}{2+r(x)h^2(x)}$ of Halley's method. Similar to Theorem \ref{ch5_thm11}, the following theorem presents the convergence result for Halley’s method \eqref{ch5_scheme}. 
\begin{theorem}\label{theorem_halley_1}
	Let $r(x)$ be a positive function in the interval $\mathcal{I}$.
	\begin{enumerate}
		\item  Let $r(x)$ be non-increasing in the interval $(z_{f'_1}, z_{f_1})$. Then, Halley's method \eqref{ch5_scheme} exhibits monotonic convergence to $z_{f_1}$ for any initial guess $x_0$ within the interval $(z_{f'_1}, z_{f_1})$.
	    \item Let $r(x)$ be non-decreasing in the interval $(z_{f_1}, z_{f'_2})$. Then, Halley's method \eqref{ch5_scheme} exhibits monotonic convergence to $z_{f_1}$ for any initial guess $x_0$ within the interval $(z_{f_1}, z_{f'_2})$.
	\end{enumerate}
\end{theorem}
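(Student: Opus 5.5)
The plan is to mirror the proof of Theorem \ref{ch5_thm11}, with the constant $r(x_0)$ replaced by the variable coefficient $r(x)$. I give part \emph{(1)}; part \emph{(2)} is symmetric. First, for $x\in(z_{f'_1},z_{f_1})$ we have $r(x)>0$, so $2+r(x)h^2(x)>0$ and $g$ is well defined there. Next I compute $g'$ using the Riccati equation \eqref{ch5_riccati}, $h'=1+rh^2$. Write $D=2+rh^2$. Then
\begin{equation*}
g'(x)=1-\frac{2h'D-2h(r'h^2+2rhh')}{D^2}
=\frac{D^2-2h'(2-rh^2)+2r'h^3}{D^2}.
\end{equation*}
Substituting $h'=1+rh^2$, the term $2h'(2-rh^2)$ equals $4+2rh^2-2r^2h^4$. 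Since $D^2=4+4rh^2+r^2h^4$, this gives
\begin{equation*}
g'(x)=\frac{3r^2(x)h^4(x)+2r(x)h^2(x)+2r'(x)h^3(x)}{(2+r(x)h^2(x))^2}.
\end{equation*}
The key sign argument follows. On $(z_{f'_1},z_{f_1})$, Remark \ref{ch5_remark1} gives $h<0$, so $h^3<0$. If $r$ is non-increasing, then $r'\le 0$, so $r'h^3\ge0$. Hence $g'(x)>0$ there, because $rh^2>0$ away from $z_{f_1}$.

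The main obstacle is that non-increasing does not imply differentiability of $r$. I would assume $r$ is $C^1$ here, as is implicit in the analogous argument via \eqref{ch5_modified_derivative}. Failing that, I would prove monotonicity of $g$ directly. Write $g(x)=x-2/(2/h+rh)$. On the interval, $h$ is increasing and negative, so $2/h$ is decreasing in $x$. The term $rh$ is the product of a positive non-increasing factor and a negative increasing factor, so monotonicity of $rh$ needs care. For this reason I would stick with the derivative formula.

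Then I run the same induction as in Theorem \ref{ch5_thm11}. Take $x_0\in(z_{f'_1},z_{f_1})$. Since $h(x_0)<0$ and $D>0$, we get $x_1=g(x_0)>x_0$. The mean value theorem gives $z_{f_1}-x_1=g(z_{f_1})-g(x_0)=g'(c)(z_{f_1}-x_0)>0$ with $c\in(x_0,z_{f_1})$. Inductively, $x_0<x_1<\dots<x_n<z_{f_1}$, and each iterate stays in the interval, where $g$ is defined. So $(x_n)$ is increasing and bounded and converges to some $L\le z_{f_1}$. By continuity, $g(L)=L$, which forces $h(L)=0$. Since $z_{f_1}$ is the only zero of $h$ in $\mathcal{I}$, $L=z_{f_1}$.

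For part \emph{(2)}, on $(z_{f_1},z_{f'_2})$ we have $h>0$ and $r'\ge0$, so again $r'h^3\ge0$ and $g'>0$. The iterates now decrease monotonically and stay above $z_{f_1}$, by the same argument.
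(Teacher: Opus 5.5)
Your proposal is correct and follows essentially the same route as the paper: the same derivative formula \eqref{ch5_g_derivative} obtained from the Riccati equation, the same sign argument ($h<0$, $r'\le 0$ giving $g'>0$), and the same induction combining Remark~\ref{ch5_remark1} with the mean value theorem. You also spell out two points the paper leaves implicit: that the limit $L$ satisfies $g(L)=L$ and hence $h(L)=0$, so $L=z_{f_1}$; and that differentiability of $r$ is being tacitly assumed.
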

\begin{proof}
\textit{(1)}. For all $x \in (z_{f'_1}, z_{f_1})$, it is evident that $2+r(x)h^2(x)>0$. Hence, Halley's method \eqref{ch5_scheme} is well-defined for $x$ in $(z_{f'_1}, z_{f_1})$. In addition,
 	\begin{equation}\label{ch5_g_derivative}
 		g'(x)=\dfrac{3 r^2(x)h^4(x)+2 r'(x)h^3(x)+2 r(x)h^2(x)}{(2+r(x)h^2(x))^2}, ~\forall~ x \in \mathcal{I}.
 	\end{equation}
As $r(x)$ is non-increasing, it follows that $g'(x)>0$ for all $x \in (z_{f'_1}, z_{f_1})$. Clearly $x_1=g(x_0)$ exists. Using Remark \ref{ch5_remark1}, it can be concluded that $x_0<x_1$. Note that
\begin{equation*}
z_{f_1}-x_1=g(z_{f_1})-g(x_0)=g'(c)(z_{f_1}-x_0)>0, \quad \text{for some }\quad c \in (x_0, z_{f_1}).
\end{equation*}
Hence, $x_1<z_{f_1}$. Thus, $x_0<x_1<z_{f_1}$. Assume that $x_2, x_3, \cdots, x_n$ exist and satisfying the relations $x_0<x_1 < x_2 < x_3 < \cdots < x_n < z_{f_1}$. Clearly, $x_{n+1}=g(x_n)$ exists. Remark \ref{ch5_remark1} ensures that $x_n<x_{n+1}$. Furthermore,  
\begin{equation*}
  z_{f_1}-x_{n+1}=g(z_{f_1})-g(x_n)=g'(c)(z_{f_1}-x_n)>0, \quad\text{for some }\quad c \in (x_{n}, z_{f_1}).  
\end{equation*}
Hence, $x_{n+1}<z_{f_1}$. Thus, the sequence $(x_n)$ obtained by the scheme \eqref{ch5_scheme} is monotonically increasing and bounded above by $z_{f_1}$. Hence, the sequence $(x_n)$ generated by Halley's method \eqref{ch5_scheme} converges. One can verify that the limit of $x_n$ is $z_{f_1}$. The proof for the second part is left over because it is similar.
\end{proof}
Assuming that $r(x)$ is monotonic, Theorem \ref{theorem_halley_1} shows that Halley's method \eqref{ch5_scheme} converges when the initial guess is chosen only from a sub-interval of $\mathcal{I}$ where $h(x)$ and $r'(x)$ have the same sign. The following theorem establishes further sufficient conditions that guarantee the nonlocal convergence of Halley's method \eqref{ch5_scheme} within a subinterval of $\mathcal{I}$ where the functions $r'(x)$ and $h(x)$ exhibit opposite signs.
\begin{theorem}\label{theorem_halley_2}
	Let $k_1$ and $k_2$ be two positive constants such that $\frac{k_2^3}{k_1^2}>\frac{1}{6}$ and $r(x)>k_2$ in the interval $\mathcal{I}$.
	\begin{enumerate}
		\item Let $0\leq r'(x)<k_1$ in the interval $(z_{f'_1}, z_{f_1})$. Then, Halley's method \eqref{ch5_scheme} exhibits monotonic convergence to $z_{f_1}$ for any initial guess $x_0$ within the interval $(z_{f'_1}, z_{f_1})$.
		\item Let $-k_1<r'(x)\leq 0$ in the interval $(z_{f_1}, z_{f'_2})$. Then, Halley's method \eqref{ch5_scheme} exhibits monotonic convergence to $z_{f_1}$ for any initial guess $x_0$ within the interval $(z_{f_1}, z_{f'_2})$.
	\end{enumerate}	
\end{theorem}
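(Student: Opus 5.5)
The plan is to reuse the monotone-sequence argument of Theorem \ref{theorem_halley_1} word for word. The only new ingredient is a proof that $g'(x)>0$ on the relevant half-interval even though $r'(x)$ and $h(x)$ now have opposite signs. I treat part \textit{(1)}; part \textit{(2)} is symmetric, with $h>0$ and $r'\le 0$ on $(z_{f_1},z_{f'_2})$.

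\textbf{Well-definedness and the sign of $g'$.} Since $r>0$, we have $2+r(x)h^2(x)>0$ on $\mathcal{I}$, so $g$ is well defined and smooth there. Starting from \eqref{ch5_g_derivative}, I factor the numerator as
\begin{equation*}
h^2(x)\bigl(3r^2(x)h^2(x)+2r'(x)h(x)+2r(x)\bigr).
\end{equation*}
At each fixed $x$, regard the bracket as a quadratic $Q(t)=3r^2t^2+2r't+2r$ in $t=h(x)$. Its leading coefficient is positive. Its discriminant is $4r'^2-24r^3$, which is negative precisely when $r'^2<6r^3$.

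The hypotheses deliver this inequality. On $(z_{f'_1},z_{f_1})$ we have $0\le r'(x)<k_1$, so $r'(x)^2<k_1^2$. Also $r(x)>k_2$ and $\frac{k_2^3}{k_1^2}>\frac16$, which give $6r(x)^3>6k_2^3>k_1^2$. Hence $Q(h(x))>0$ whatever the value of $h(x)$. By Remark \ref{ch5_remark1}, $h(x)\neq0$ on $(z_{f'_1},z_{f_1})$, so $g'(x)>0$ there.

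I regard this discriminant step as the only real obstacle. Its role is to absorb the negative term $2r'h^3$ that breaks the argument of Theorem \ref{theorem_halley_1}. It succeeds because the bound does not depend on the size of $|h|$, which may be arbitrarily large near $z_{f'_1}$.

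\textbf{Monotone convergence.} I then run the induction exactly as in Theorem \ref{theorem_halley_1}. By Remark \ref{ch5_remark1}, $h(x_n)<0$, so $x_{n+1}=x_n-\frac{2h(x_n)}{2+r(x_n)h^2(x_n)}>x_n$. The mean value theorem gives
\begin{equation*}
z_{f_1}-x_{n+1}=g(z_{f_1})-g(x_n)=g'(c)(z_{f_1}-x_n)>0 \quad\text{for some } c\in(x_n,z_{f_1}).
\end{equation*}
Thus $x_0<x_1<\cdots<x_n<z_{f_1}$, and the sequence increases to some limit $L\le z_{f_1}$. By continuity of $g$ on $[x_0,z_{f_1}]$, $L=g(L)$. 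This forces $h(L)=0$, and $z_{f_1}$ is the only zero of $h$ in $\mathcal{I}$, so $L=z_{f_1}$.

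For part \textit{(2)}, the same discriminant bound with $-k_1<r'\le0$ yields $g'>0$ on $(z_{f_1},z_{f'_2})$. The sequence is then decreasing and bounded below by $z_{f_1}$.
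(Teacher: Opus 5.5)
Your proof is correct and follows the paper's route: both reduce $g'>0$ to showing that the quadratic factor in $h$ has negative discriminant. The paper first bounds the coefficients by the constants to get $P(z)=3k_2^2z^2+2k_1z+2k_2$, whereas you apply the discriminant test pointwise to $3r^2t^2+2r't+2r$, which shows that $r'(x)^2<6r(x)^3$ alone suffices; your fixed-point argument identifying the limit also fills in the step the paper leaves to the reader.
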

\begin{proof}
	\textit{(1)}. Clearly the function $g(x)$ is well-defined in the interval $(z_{f'_1}, z_{f_1})$. Consider the quadratic polynomial 
$P(z)=3k_2^2z^2+2k_1z+2k_2$. One can verify that $P(z)>0$ for all $z\in \mathbb{R}$. Using the fact that $0\leq r'(x)<k_1$ and $r(x)>k_2$ for all $x \in (z_{f'_1}, z_{f_1}) \in \mathcal{I}$ in Eq. \eqref{ch5_g_derivative}, one can get
\begin{equation*}
        g'(x)> \dfrac{h^2(x)P(h(x))}{(2+r(x)h^2(x))^2}>0.
\end{equation*}
Note that $x_1 = g(x_0)$ exists. Using Remark \ref{ch5_remark1}, it can be concluded that $x_0<x_1$. Observe that
   \begin{equation*}
       z_{f_1}-x_1=g(z_{f_1})-g(x_0)=g'(c)(z_{f_1}-x_0)>0, \quad\text{for some }\quad c \in (x_0, z_{f_1}).
   \end{equation*}
Hence, $x_1<z_{f_1}$. Thus, $x_0<x_1<z_{f_1}$. Assume that $x_2, x_3, \cdots, x_n$ exist and satisfying the relations $x_0<x_1 < x_2 < x_3 < \cdots < x_n < z_{f_1}$. Clearly, $x_{n+1}=g(x_n)$ exists. Remark \ref{ch5_remark1} ensures that $x_n<x_{n+1}$. Furthermore,  
\begin{equation*}
  z_{f_1}-x_{n+1}=g(z_{f_1})-g(x_n)=g'(c)(z_{f_1}-x_n)>0, \quad\text{for some }\quad c \in (x_{n}, z_{f_1}).  
\end{equation*}
Hence, $x_{n+1}<z_{f_1}$. Thus, the sequence $(x_n)$ obtained by the scheme \eqref{ch5_scheme} is monotonically increasing and bounded above by $z_{f_1}$. Hence, the sequence $(x_n)$ generated by the modified Halley's method \eqref{mscheme} converges. One can verify that the limit of $x_n$ is $z_{f_1}$. One can prove the second part in a similar way.
\end{proof}
By clubbing Theorem \ref{theorem_halley_1} and Theorem \ref{theorem_halley_2}, one can obtain the following version of the global convergence theorem in the interval $\mathcal{I}$.
\begin{theorem}\label{theorem_halley_3}
    Let $r(x)$ be a positive and monotone function on $\mathcal{I}$. Let $k_1$ and $k_2$ be positive constants such that $r(x)>k_2$, $|r'(x)|<k_1$ in the interval $\mathcal{I}$ and $\frac{k_2^3}{k_1^2}>\frac{1}{6}$. Then, Halley's method \eqref{ch5_scheme} converges monotonically to the solution $z_{f_1}$ for any initial guess $x_0$ in the interval $\mathcal{I}$.	
\end{theorem}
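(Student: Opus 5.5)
The plan is to split $\mathcal{I}$ at the zero $z_{f_1}$ and, on each half, invoke either Theorem \ref{theorem_halley_1} or Theorem \ref{theorem_halley_2}, according to whether the sign of $r'(x)$ agrees or disagrees with the sign of $h(x)$ there. Since $r(x)$ is monotone on $\mathcal{I}$, there are two cases: $r$ non-increasing on $\mathcal{I}$ (so $r'\le 0$ wherever it exists), or $r$ non-decreasing (so $r'\ge 0$). The hypotheses $|r'(x)|<k_1$ and $r(x)>k_2$ presuppose that $r$ is differentiable, as in formula \eqref{ch5_g_derivative}, so $r'$ has a constant sign on all of $\mathcal{I}$ in each case.

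\emph{Case $r$ non-increasing.} For $x_0\in(z_{f'_1},z_{f_1})$, $r$ is non-increasing on that subinterval, and part (1) of Theorem \ref{theorem_halley_1} gives monotone convergence to $z_{f_1}$. For $x_0\in(z_{f_1},z_{f'_2})$, we have $-k_1<r'(x)\le 0$ there, and together with $r(x)>k_2$ and $k_2^3/k_1^2>1/6$ this is exactly the hypothesis of part (2) of Theorem \ref{theorem_halley_2}, which yields monotone convergence to $z_{f_1}$ from above.

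\emph{Case $r$ non-decreasing.} Symmetrically, for $x_0\in(z_{f_1},z_{f'_2})$ we apply part (2) of Theorem \ref{theorem_halley_1}. For $x_0\in(z_{f'_1},z_{f_1})$ we have $0\le r'(x)<k_1$, so part (1) of Theorem \ref{theorem_halley_2} applies. The remaining starting point $x_0=z_{f_1}$ is a fixed point of $g$, so the iteration is constant there.

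There is no real analytic obstacle, because the substantive estimates (positivity of $g'$ via the quadratic $P(z)=3k_2^2z^2+2k_1z+2k_2$, whose discriminant is negative exactly when $k_2^3/k_1^2>1/6$) are already contained in Theorem \ref{theorem_halley_2}. The only points needing care are:
\begin{itemize}
\item checking that the hypotheses of Theorem \ref{theorem_halley_2}, stated with one-sided bounds on a half-interval, follow from the two-sided bound $|r'|<k_1$ together with the sign of $r'$ given by monotonicity;
\item noting that "monotone convergence" is meant separately on each side of $z_{f_1}$: increasing iterates from the left and decreasing iterates from the right.
\end{itemize}
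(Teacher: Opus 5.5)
Your proposal is correct and takes essentially the paper's approach: the paper obtains this theorem by ``clubbing'' Theorem~\ref{theorem_halley_1} and Theorem~\ref{theorem_halley_2}. That is, it applies Theorem~\ref{theorem_halley_1} on the half of $\mathcal{I}$ where $r'$ and $h$ have the same sign and Theorem~\ref{theorem_halley_2} on the half where they have opposite signs, which is exactly your case split. Your extra remarks only make explicit what the paper leaves implicit: that monotonicity together with differentiability fixes the sign of $r'$, and that $x_0=z_{f_1}$ is a trivial fixed point.
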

We will conclude our discussion of this subsection by emphasising an advantage of the proposed modified Halley's method when compared to both the original Halley's method and the fourth-order method presented in \cite{Se10}. 
\begin{remark}
All the available nonlocal convergence theorems for the fourth-order method \cite{Se10} and Halley's method in this manuscript necessitate the monotonicity of $r(x)$. However, the nonlocal convergence result in Theorem \ref{ch5_thm22} for the modified Halley's method does not require the monotonicity of the coefficient function $r(x)$. Consider the following second-order differential equation:
\begin{equation}
\label{example_ode}
  y''+\left(\frac{5}{4}+\frac{1}{4}\sin(4x^2)\right)y=0.  
\end{equation} 
Let $f(x)$ be a non-trivial solution of the differential equation \eqref{example_ode} and define $r(x)=\frac{5}{4}+\frac{1}{4}\sin(4x^2)$. It is evident that $1 \leq r(x) \leq \frac{3}{2}$ for all $x \in \mathbb{R}$ and $r'(x)=2x\cos(4x^2)$. Consequently, $r'(x)$ changes signs in any interval whose length exceeds $\frac{\sqrt{\pi}}{2}$. Theorem \cite[Theorem 31.7]{AO08} ensures that $f(x)$ has infinitely many zeros in $(0, \infty)$. Let $z_{f'_1}<z_{f'_2}$ be two consecutive positive zeros of $f'(x)$ and $z_{f_1}\in (z_{f'_1}, z_{f'_2})$ be the unique zero of $f(x)$. Lemma \ref{ch5_lemma0} guarantees that
\begin{equation*}
   z_{f'_2}-z_{f_1}>\frac{\pi}{\sqrt{6}}~~~\text{and}~~~ z_{f_1}-z_{f'_1}>\frac{\pi}{\sqrt{6}}.
\end{equation*}
As $\frac{\sqrt{\pi}}{2}<\frac{\pi}{\sqrt{6}}$, $r'(x)$ changes signs in both the intervals $(z_{f'_1}, z_{f_1})$ and $(z_{f_1}, z_{f'_2})$. In other words $r$ is not a monotone function neither in $(z_{f'_1}, z_{f_1})$ nor in $(z_{f_1}, z_{f'_2})$. The nonlocal convergence results available in \cite{Se10} and in this manuscript, corresponding to the fourth-order method and Halley's method, respectively, are not usable for the intervals $(z_{f'_1}, z_{f_1})$ and $(z_{f_1}, z_{f'_2})$. Theorem \ref{ch5_thm22} confirms that the proposed modified Halley's method exhibits nonlocal convergence throughout the entire interval $\mathcal{I}=(z_{f'_1}, z_{f'_2})$ for this problem.     
\end{remark}
\subsection{Initial guesses for multiple zeros}
In this subsection, initial guesses are provided for the proposed modified Halley's method to approximate all zeros of $f(x)$ within a specific interval, utilising the bounds from Lemma \ref{ch5_lemma0} to Lemma \ref{ch5_lemma3}. This discussion parallels that found in \cite{Se10} regarding the fourth-order method. Let $z_i$ and $z'_i$ denote the $i^{\text{th}}$ zeros of the function $f(x)$ and its derivative $f'(x)$, respectively. Let $n\in \mathbb{N}$. Assume that the interval $\mathcal{J}'=[z'_i,z'_{i+n+1}]$ contains $n+1$ zeros of the function $f(x)$, say $z_k \in \mathcal{J}'$, $k=i, i+1,\cdots, i+n$. In other words,  
\begin{equation*}
z'_i<z_i<z'_{i+1}<z_{i+1}<\cdots<z'_{i+n}<z_{i+n}<z'_{i+n+1}.
\end{equation*}
Let $z_0^{k}$ denote the suitable initial guess for the proposed modified Halley's method \eqref{mscheme} to approximate the zero $z_k$ of $f(x)$. Let $c>0$. If $r(x)$ is a monotonic function and it satisfies the condition $c \leq r(x) \leq 4c$ in the interval $\mathcal{J}'$, then Table \ref{table_J'_c<r<4c} provides solid initial guesses for the modified Halley's method to compute all the zeros in the interval $\mathcal{J}'$. Theorem \ref{ch5_thm11} ensures the convergence of modified Halley's method for the initial guesses presented in Table \ref{table_J'_c<r<4c}. 

\begin{table}[ht]
\centering
\small
\renewcommand{\arraystretch}{1.0}
\setlength{\tabcolsep}{6pt}

\begin{tabular}{|c|p{6.5cm}|c|}
\hline
\multicolumn{3}{|c|}{\textbf{Let $\boldsymbol{c>0}$, \; $\boldsymbol{c\leq r(x) \leq 4c}$ in $\boldsymbol{\mathcal{J'}}$}} \\ \hline

\textbf{Sign of $\boldsymbol{r'(x)}$} 
& \textbf{Initial guesses to find all zeros in $\boldsymbol{\mathcal{J'}}$}
& \textbf{Justification} \\ \hline

$r'(x)>0$ 
& $z_0^{i+n}=z'_{i+n+1}-\frac{\pi}{2\sqrt{r(z'_{i+n+1})}} \in (z_{i+n}, z'_{i+n+1})$, 
& Lemma~\ref{ch5_lemma1}\\
& for $j=i+n, \cdots, i+2, i+1:$\hspace{2mm}
& \hspace{1mm}\\
& $z_0^{j-1} = z_{j}-\frac{\pi}{\sqrt{r(z_{j})}} \in (z_{j-1}, z'_{j})$
& Lemma~\ref{ch5_lemma3}\\ \hline

$r'(x)<0$ 
& $z_0^{i}=z'_{i}+\frac{\pi}{2\sqrt{r(z'_{i})}} \in (z'_i, z_i)$,
& Lemma~\ref{ch5_lemma1}\\
& for $k=i, i+2, \cdots i+n-1$: \hspace{3mm}
& \hspace{1mm}\\
& $z_0^{k+1}=z_{k}+\frac{\pi}{\sqrt{r(z_{k})}} \in (z'_{k+1},z_{k+1})$
& Lemma~\ref{ch5_lemma3}\\ \hline

\end{tabular}

\caption{Initial guesses for computing zeros in $\mathcal{J'}$ if $r$ is monotone and $c \leq r(x) \leq 4c$}
\label{table_J'_c<r<4c}

\end{table}
The conditions of monotonicity and the existence of a positive constant $c$ for the coefficient function $r(x)$ restrict the application of the proposed method to a smaller class of problems. This situation resembles the challenges encountered by the fourth-order method discussed in \cite{Se10}. If the coefficient function $r(x)$ is monotone, then a suitable shift function $S_j(x)$ can be employed to derive an appropriate initial guess for the proposed method. Let $r(x)$ be a monotone function within the interval $\mathcal{J}'$. The shift function is defined as follows: 
\begin{equation}
    S_j(x) = x - \frac{j\pi}{2\sqrt{r(x)}}, ~\text{where}~ j = \text{sign}(r'(x)) ~\text{in}~ \mathcal{J}'.
\end{equation}
The shift function mentioned above differs from the one presented in \cite{Se10}. Lemma \ref{ch5_lemma2} ensures that if $r'(z)<0$ in $\mathcal{J}'$, then the inequality $z_k+\frac{\pi}{\sqrt{r(z_k)}} < z_{k+1}$ holds for $k=i,i+1,\cdots,i+n-1$. This inequality is independent of the condition $c \leq r(x) \leq 4c$. However, there is no guarantee that $z_k + \frac{\pi}{\sqrt{r(z_k)}} > z'_{k+1}$ holds when the condition $c \leq r(x) \leq 4c$ is relaxed. If $z_k + \frac{\pi}{\sqrt{r(z_k)}}$ falls within the interval $(z_k, z'_{k+1})$, there is no theoretical guarantee that the proposed method will converge to the next zero $z_{k+1}$ for the initial guess $z_k + \frac{\pi}{\sqrt{r(z_k)}}$. In this case, using the shift function $S_{-1}(x)$ and $z_k+\frac{\pi}{\sqrt{r(z_k)}}$, one can obtain a suitable initial guess to compute $z_{k+1}$. Similarly, if $r(x)$ satisfies the condition $r'(x) > 0$ alone, then $z_j - \frac{\pi}{\sqrt{r(z_j)}}$ may not serve as a suitable initial guess for computing the next zero $z_{j-1}$, where $j = i+n, i+n-1, \dots, i+1$. In this case, using the shift function $S_{1}(x)$ and $z_j-\frac{\pi}{\sqrt{r(z_j)}}$, one can obtain a suitable initial guess to compute $z_{j-1}$. The following lemma is an important tool in this direction.
\begin{lemma}\label{ch5_lemma4} Let $r(x)$ be a positive function in the interval $\mathcal{J'}$.
\begin{enumerate}
  \item  Let $r'(x)<0$ in the interval $\mathcal{J'}$. Then for each $z_k$, $k=i,i+1,\cdots,i+n-1$, there exists a non-negative integer $N_k$ such that $S^{N_k}_{-1}(z_k+\frac{\pi}{\sqrt{r(z_k)}}) \in (z'_{k+1},z_{k+1})$.
  \item    Let $r'(x)>0$ in the interval $\mathcal{J'}$. Then for each $z_{j}$, $j=i+n, i+n-1, \cdots, i+1$, there exists a non-negative integer $N_j$ such that $S^{N_j}_{1}(z_j-\frac{\pi}{\sqrt{r(z_j)}}) \in (z_{j-1},z'_{j})$.
\end{enumerate}
\end{lemma}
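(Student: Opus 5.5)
Part (1) is a descent argument on the interval $(z_k, z_{k+1})$; part (2) will follow by the reflection $x\mapsto -x$. Assume $r'(x)<0$ in $\mathcal{J}'$ and fix $k\in\{i,\dots,i+n-1\}$. Put $w_0=z_k+\frac{\pi}{\sqrt{r(z_k)}}$ and $w_{m+1}=S_{-1}(w_m)=w_m+\frac{\pi}{2\sqrt{r(w_m)}}$. Part (2) of Lemma \ref{ch5_lemma2} gives $w_0<z_{k+1}$, and clearly $w_0>z_k$.

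First I dispose of the easy cases. If $w_0\in(z'_{k+1},z_{k+1})$, take $N_k=0$. If $w_0=z'_{k+1}$, one step suffices, since part (2) of Lemma \ref{ch5_lemma1} gives $z'_{k+1}<S_{-1}(z'_{k+1})=z'_{k+1}+\frac{\pi}{2\sqrt{r(z'_{k+1})}}<z_{k+1}$. So the real case is $w_0\in(z_k,z'_{k+1})$.

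The key step is to show that $S_{-1}$ maps $(z_k,z'_{k+1}]$ into $(z_k,z_{k+1})$. Let $x\in(z_k,z'_{k+1}]$. Since $r$ is decreasing, $r(x)\ge r(z'_{k+1})$, so
\[
S_{-1}(x)\le x+\frac{\pi}{2\sqrt{r(z'_{k+1})}}\le z'_{k+1}+\frac{\pi}{2\sqrt{r(z'_{k+1})}}<z_{k+1},
\]
where the last inequality is the lower bound $z_{k+1}-z'_{k+1}>\frac{\pi}{2\sqrt{r(z'_{k+1})}}$ from part (2) of Lemma \ref{ch5_lemma1}, applied on $(z_k,z_{k+1})$. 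Hence the iterates $w_m$ stay in $(z_k,z_{k+1})$ for as long as they lie in $(z_k,z'_{k+1}]$. Once an iterate lands in $(z'_{k+1},z_{k+1})$, we stop and take $N_k$ to be that index.

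It remains to show that the iterates cannot stay in $(z_k,z'_{k+1}]$ forever. This is the main, though mild, obstacle. While they stay there, $r(w_m)\le r(z_k)$ by monotonicity, so each step increases $w$ by at least $\frac{\pi}{2\sqrt{r(z_k)}}>0$. A sequence in a bounded interval cannot grow by a fixed positive amount forever, so some $w_m$ must exceed $z'_{k+1}$. By the key step it is still below $z_{k+1}$.

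In fact the bound is explicit: $N_k\le \left\lceil 2\sqrt{r(z_k)}\,(z'_{k+1}-z_k)/\pi\right\rceil$. By part (2) of Lemma \ref{ch5_lemma1} this is $\le\lceil\sqrt{r(z_k)/r(z'_{k+1})}\rceil$, which shows $N_k$ is finite and typically small.

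For part (2), with $r'>0$, run the mirror argument using $S_1(x)=x-\frac{\pi}{2\sqrt{r(x)}}$ and part (1) of Lemmas \ref{ch5_lemma1}–\ref{ch5_lemma2}. The starting point $z_j-\frac{\pi}{\sqrt{r(z_j)}}$ lies in $(z_{j-1},z_j)$. If it lies in $[z'_j,z_j)$, then since $r$ is increasing, $S_1(x)\ge x-\frac{\pi}{2\sqrt{r(z'_j)}}\ge z'_j-\frac{\pi}{2\sqrt{r(z'_j)}}>z_{j-1}$, using $z'_j-z_{j-1}>\frac{\pi}{2\sqrt{r(z'_j)}}$. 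Each step moves left by at least $\frac{\pi}{2\sqrt{r(z_j)}}$, so finitely many steps land the iterate in $(z_{j-1},z'_j)$.
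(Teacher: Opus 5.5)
Your proof is correct and follows essentially the same route as the paper's. Both iterate $S_{-1}$ from $z_k+\frac{\pi}{\sqrt{r(z_k)}}$, use part (2) of Lemma \ref{ch5_lemma1} to get $S_{-1}(x)\le S_{-1}(z'_{k+1})<z_{k+1}$ for $x\le z'_{k+1}$ (the paper via monotonicity of $S_{-1}$, you via monotonicity of $r$), and take $N_k$ as the first index past $z'_{k+1}$. Your uniform step bound $\frac{\pi}{2\sqrt{r(z_k)}}$ additionally justifies that the iterates eventually exceed $z'_{k+1}$, which the paper only asserts from the sequence being increasing, and you also handle the boundary case $z_k+\frac{\pi}{\sqrt{r(z_k)}}=z'_{k+1}$.
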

\begin{proof}
\textit{(1)}. If $z_{k}+\frac{\pi}{\sqrt{r(z_{k})}} > z'_{k+1}$, then $N_k=0$. Assume that $z_{k}+\frac{\pi}{\sqrt{r(z_{k})}} < z'_{k+1}$. It is straightforward to confirm that $S_{-1}(x)$ is an increasing function on $\mathcal{J}'$. Part 2 of Lemma \ref{ch5_lemma1} ensures that $z'_{k+1}+\frac{\pi}{2\sqrt{r(z'_{k+1})}} < z_{k+1}$. In other words 
\begin{equation}\label{upper_bound_S}
    S_{-1}(z'_{k+1})<z_{k+1}.
\end{equation}
Now consider the successive iterative scheme $x_{n+1}=S_{-1}(x_n)$ with $x_0=z_{k}+\frac{\pi}{\sqrt{r(z_{k})}}$. Hence, $x_n=S^n_{-1}(x_0)$ for all $n\in \mathbb{N}$. Clearly, $(x_n)$ is an increasing sequence. Hence, there exists an integer $N_k$ such that $S^{N_k-1}_{-1}(x_0) \in (z_k,z'_{k+1}]$ and $S^{N_k}_{-1}(x_0) > z'_{k+1}$. In other words,
\begin{eqnarray}
    S^{N_k-1}_{-1}(x_0) \leq z'_{k+1}~~\text{and}~~ z'_{k+1} < S^{N_k}_{-1}(x_0).
\end{eqnarray}
As $S_{-1}(x)$ is an increasing function, the inequality $S^{N_k-1}_{-1}(x_0) \leq z'_{k+1}$ and Eq. \eqref{upper_bound_S} leads to $S_{N_k}(x_0)< z_{k+1}$. Hence, the result follows. One can prove the second part in a similar way.
\end{proof}

\begin{remark}\label{remark_initial_guess*}
Let $r(x)$ be a positive function defined on the interval $\mathcal{J}'$. Let $N_k$ denote the non-negative integer as specified in the proof of Lemma \ref{ch5_lemma4}. Let $\tilde{z}^{k+1}_0$ denote $S_{-1}^{N_k}(z_k + \frac{\pi}{\sqrt{r(z_k)}})$ for $k = i, i+1, \cdots, i+n-1$. If $r'(x)<0$ in $\mathcal{J}'$, then $\tilde{z}^{k+1}_0 \in (z'_{k+1},z_{k+1})$. The conclusion drawn from Theorem \ref{ch5_thm11} guarantees that the proposed method converges to the subsequent zero $z_{k+1}$, using the initial guess $\tilde{z}^{k+1}_0$. Note that if $N_k \in \mathbb{N}$, then $N_k$ is the smallest integer such that $h(S_{-1}^{N_k}(z_k + \frac{\pi}{\sqrt{r(z_k)}}))<0$ and $h(S_{-1}^{N_k-1}(z_k + \frac{\pi}{\sqrt{r(z_k)}}))>0$. Let $\tilde{z}^{j-1}_0$ denote $S_{1}^{N_j}(z_{j} - \frac{\pi}{\sqrt{r(z_j)}})$ for $j = i+n, i+n-1, \cdots, i+1$. If $r'(x)>0$ in $\mathcal{J}'$, then $\tilde{z}^{j-1}_0 \in (z_{j-1},z'_{j})$. Similarly, Theorem \ref{ch5_thm11} guarantees that the proposed method converges to the subsequent zero $z_{j-1}$, using the initial guess $\tilde{z}^{j-1}_0$. Note that if $N_j \in \mathbb{N}$, then $N_j$ is the smallest integer such that $h(S_{-1}^{N_j}(z_j - \frac{\pi}{\sqrt{r(z_j)}}))>0$ and $h(S_{-1}^{N_j-1}(z_j - \frac{\pi}{\sqrt{r(z_j)}}))<0$.
\end{remark}
We conclude the discussion of this section by summarising the results in the Lemma \ref{ch5_lemma4} and the Remark \ref{remark_initial_guess*} as Table \ref{initial_guess_J'_1}.

\begin{table}[ht]
\centering
\footnotesize
\renewcommand{\arraystretch}{1.0}
\setlength{\tabcolsep}{3pt}

\begin{tabular}{|c|p{6.7cm}|p{2.0cm}|}
\hline
\multicolumn{3}{|c|}{\textbf{Let $\boldsymbol{r(x)}$ be positive in $\boldsymbol{\mathcal{J'}}$}} \\ \hline

\textbf{Sign of $\boldsymbol{r'(x)}$} 
& \centering \textbf{Initial guesses to compute all zeros in $\boldsymbol{\mathcal{J'}}$}
& \centering \textbf{Justification}
\arraybackslash \\ \hline

$r'(x)>0$ 
& $z_0^{i+n}= z'_{i+n+1}
-\dfrac{\pi}{2\sqrt{r(z'_{i+n+1})}}
\in (z_{i+n}, z'_{i+n+1})$
& Lemma~\ref{ch5_lemma1} \\

& for $j=i+n,\cdots,i+2,i+1:$ 
& \\

& 
$z_0^{j-1}=\tilde{z}_0^{{j-1}}
\in (z_{j-1},z'_{j})$
& Remark~\ref{remark_initial_guess*} \\ \hline

$r'(x)<0$ 
& $z_0^{i}= z'_{i}
+\dfrac{\pi}{2\sqrt{r(z'_{i})}}
\in (z'_i,z_i)$
& Lemma~\ref{ch5_lemma1} \\

& for $k=i,i+1,\cdots,i+n-1:$ 
& \\

&
$z_0^{k+1}=\tilde{z}_0^{{k+1}}
\in (z'_{k+1},z_{k+1})$
& Remark~\ref{remark_initial_guess*} \\ \hline

\end{tabular}

\caption{Initial guesses for computing zeros in $\mathcal{J'}$ if $r$ is monotone}
\label{initial_guess_J'_1}
\end{table}

\section{Applications}
This section introduces algorithms based on the modified Halley's method \eqref{mscheme} for computing the nodes and weights of Gauss-Legendre and Gauss-Hermite quadrature. This section also compares their performance against methods presented in recent literature. Throughout this section, the algorithms referred to as \textbf{MHM-H} and \textbf{MHM-L} utilise the modified Halley's method for Gauss-Hermite quadrature and Gauss-Legendre quadrature, respectively. The algorithms referred to as \textbf{FOM-L} \cite{GST21a} and \textbf{FOM-H} \cite{GST19} are based on the fourth-order iterative scheme studied in \cite{Se10} for Gauss-Legendre and Gauss-Hermite quadrature, respectively. The algorithms denoted as \textbf{TOM-L} and \textbf{TOM-H} utilise the third-order iterative method proposed in \cite{PSV25} for Gauss-Legendre and Gauss-Hermite quadrature, respectively. All numerical experiments were carried out using MATLAB R2024b (64-bit) on a 64-bit Windows PC equipped with a 12th-generation Intel Core i7 processor and 16 GB of RAM. Let $x_k^{(d)}$ and $x_k^{(q)}$ represent the numerical approximation of the $k^{\text{th}}$ root of a polynomial, as obtained through double-precision arithmetic and extended-precision arithmetic, respectively. Let $w_k^{(d)}$ and $w_k^{(q)}$ represent the numerical approximation of the $k^{\text{th}}$ weight of Gauss quadrature, as obtained through double-precision arithmetic and extended-precision arithmetic, respectively. The relative error (RE) for the $k^{\text{th}}$ zero and the $k^{\text{th}}$ weight is defined by
\begin{equation}\label{relative_error_ch5}
	\text{RE}_{x_k} = \left|1-\frac{x_k^{(d)}}{x_k^{(q)}}\right|; ~~~~~~~~\text{RE}_{w_k} = \left|1-\frac{w_k^{(d)}}{w_k^{(q)}}\right|.
\end{equation}
To compare the performance of the proposed algorithms, which are based on modified Halley's method \eqref{mscheme}, with those derived from the third-order method \cite{PSV25} and the fourth-order method \cite{GST19, GST21a}, we use the maximum relative error concerning the fourth-order method, referred to as MR.Er.F. For a polynomial of degree $n$, MR.Er.F. is defined by 
    \begin{equation} \label{MR.Er-F_ch5}
      \mbox{MR.Er-F} =  \max_{ k=1,2,\cdots, n} \mbox{(Er-F)}_k~, ~~\mbox{where}~~ \mbox{(Er-F)}_k=\left|1 - \frac{x_k^{c}}{x_k^{f}}\right|,
    \end{equation}
$x_k^{f}$ and $x_k^{c}$ represent the $k^{\mbox{th}}$ root of the polynomial calculated using the fourth-order algorithm and the algorithm being compared to the fourth-order algorithm, respectively, through double-precision calculations. Throughout this numerical simulation, the third-order method TOM-H and TOM-L use the robust initial guess recommended in \cite{PSV25}. The FOM-H \cite{GST19}, FOM-L \cite{GST21a}, and the proposed MHM-H and MHM-L use the same initial guesses. The numerical implementations of MHM-H and MHM-L closely resemble those of FOM-H and FOM-L, as described in \footnote{https://github.com/NumericalQuadrature/GaussQuadrature}. The codes used in the comparative study are available in \footnote{\url{https://github.com/Dhivya-Prabhu-K/Modified_Halley-s_Method}}. To evaluate the Hermite and Legendre polynomials and their derivatives, the local Taylor series approach discussed in \cite{GLR07} was used. It is noteworthy that the fourth-order algorithms FOM-H \cite{GST19} and FOM-L \cite{GST21a}, along with the third-order algorithms TOM-H and TOM-L \cite{PSV25}, also employ this approach for the evaluation of the polynomial and its derivative. In this approach, the latest iterate was used as the centre for the Taylor series. The higher-order derivatives in the Taylor series are easily evaluated using a suitable recurrence relation obtained from the associated second-order differential equation.

\subsection{Gauss-Hermite quadrature}
This subsection outlines an algorithm that uses the proposed modified Halley's method \eqref{mscheme} to compute Gauss-Hermite quadrature nodes and weights. Let $x \in \mathbb{R}$ and $n\in \mathbb{N}$. The Hermite polynomial $H_n(x)$ of degree $n$ is known to be a solution of the second-order differential equation given by $y''(x) - 2xy'(x) + 2ny(x) = 0$. From \cite[Table 18.6.1]{OLBC10}, it can be concluded that the zeros of $H_n(x)$ are symmetric about $x=0$. It follows that identifying all the zeros of $H_n(x)$ requires only the determination of its positive zeros. Using \cite[p. 168]{Te96}, it can be concluded that all positive zeros of $H_n(x)$ are confined to the interval $(0, \sqrt{2n+1})$. Consider the function $f(x)=\kappa {\rm e}^{-\frac{x^2}{2}}H_n(x)$, where $\kappa$ is a normalising constant. The functions $H_n(x)$ and $f(x)$ share the same zeros. It is straightforward to confirm that the function $f(x)$ satisfies the subsequent differential equation:
\begin{equation}\label{ch5_eq11}
	f''(x)+r(x)f(x)=0,
\end{equation}
where $r(x)=2n+1-x^2$. Consequently, this subsection uses the proposed modified Halley's method to obtain all zeros of the function $f(x)$ and deduce the nodes and weights of the Gauss-Hermite quadrature. The coefficient function $r(x)$ is positive and monotonically decreasing in the interval $(0, \sqrt{2n+1})$. 
As discussed in \cite{GST19}, the Gauss-Hermite weights $w_i$ corresponding to the nodes $x_i$ are given by the expressions
\begin{eqnarray*}
  w_i &=&\dfrac{\sqrt{\pi}2^{n+1}n!~\kappa^2}{f'(x_i)^2}e^{-x_i^2};~~~~~~
  \frac{1}{2^{n+2}n!~\kappa^2} = \sum_{j=1}^{\lfloor\frac{n}{2}\rfloor} \dfrac{2e^{-x_j^2}x_j^2}{f'(x_j)^2}.
\end{eqnarray*}
\begin{table}[ht]
\centering
\scriptsize
\setlength{\tabcolsep}{3pt} 
\renewcommand{\arraystretch}{1.1} 
\resizebox{\textwidth}{!}{
\begin{tabular}{|c|cc|ccc|ccc|}
\hline
\multirow{2}{*}{\textbf{$\boldsymbol{n}$}} 
& \multicolumn{2}{c|}{\textbf{FOM-H \cite{GST19}}} 
& \multicolumn{3}{c|}{\textbf{TOM-H \cite{PSV25}}} 
& \multicolumn{3}{c|}{\textbf{MHM-H}} \\ \cline{2-9}

& \textbf{T-Iter} & \textbf{A-Time} 
& \textbf{T-Iter} & \textbf{A-Time} & \textbf{MR.Er-F} 
& \textbf{T-Iter} & \textbf{A-Time} & \textbf{MR.Er-F} \\ \hline

1000000 & 508135 & 0.1027 & 524319 & 0.0951 & $2.60\times10^{-16}$ & 508146 & 0.0933 & $1.89\times10^{-16}$ \\
1050000 & 532925 & 0.1040 & 548657 & 0.0996 & $2.22\times10^{-16}$ & 532936 & 0.0970 & $1.81\times10^{-16}$ \\
1100000 & 557731 & 0.1082 & 573045 & 0.1004 & $2.34\times10^{-16}$ & 557742 & 0.0995 & $1.58\times10^{-16}$ \\
1200000 & 607382 & 0.1168 & 621958 & 0.1148 & $2.22\times10^{-16}$ & 607393 & 0.1081 & $1.62\times10^{-16}$ \\
1300000 & 657076 & 0.1261 & 671014 & 0.1251 & $2.22\times10^{-16}$ & 657087 & 0.1165 & $1.53\times10^{-16}$ \\ \hline
\end{tabular}
}
\caption{Performance: FOM-H \cite{GST19} vs TOM-H \cite{PSV25} vs Proposed MHM-H \eqref{mscheme}}
\label{Table_comparison_hermite_ch5}
\end{table}
Table~\ref{Table_comparison_hermite_ch5} displays the total number of iterations and the average time taken by FOM-H \cite{GST19}, TOM-H \cite{PSV25}, and the proposed modified Halley's method, MHM-H \eqref{mscheme}. This data pertains to the computation of all the zeros of selected Hermite polynomials of varying degrees within the range of $[10^{6}, 13\times 10^{5}]$. In Table \ref{Table_comparison_hermite_ch5}, the performance of the proposed modified Halley's method, alongside the third-order method \cite{PSV25}, is compared with that of the fourth-order method \cite{GST19}. As anticipated, FOM-H \cite{GST19} computes all the zeros of $H_n(x)$ with the least number of iterations. Among third-order methods, the proposed modified Halley's method requires significantly fewer iterations than the method presented in \cite{PSV25} to compute all zeros of $H_n(x)$, with better accuracy. Although the proposed method is third-order, the difference in the total number of iterations compared to the fourth-order method \cite{GST19} is relatively minor. Both the third-order methods have better execution times than the fourth-order method. However, the proposed modified Halley's method has the least execution time. This result could be attributed to the computational complexity of repeatedly evaluating functions within the respective iterative methods.

Figure \ref{error-nodes-hermite_ch5} presents the relative errors \eqref{relative_error_ch5} for the methods MHM-H \eqref{mscheme}, FOM-H \cite{GST19}, and TOM-H \cite{GST19}, evaluated at various nodes of Gauss-Hermite quadrature. More specifically, Figure \ref{error-nodes-hermite_ch5} displays the relative error \eqref{relative_error_ch5} across the five thousand positive nodes associated with the Hermite polynomial of degree ten thousand. The figure clearly shows that all methods produce nearly the same level of accuracy. Figure \ref{error-weights-hermite_ch5} presents the relative errors \eqref{relative_error_ch5} for the methods MHM-H \eqref{mscheme}, FOM-H \cite{GST19}, and TOM-H \cite{PSV25}, evaluated at various weights of Gauss-Hermite quadrature. More specifically, Figure \ref{error-weights-hermite_ch5} displays the relative error \eqref{relative_error_ch5} across the five thousand weights corresponding to positive nodes associated with the Hermite polynomial of degree ten thousand. The figure clearly shows that all methods produce good accuracy. Figure \ref{CPU_hermtie_ch5} presents the average CPU time for the methods MHM-H \eqref{mscheme}, FOM-H \cite{GST19}, and TOM-H \cite{PSV25} to compute all zeros of the twenty-five Hermite polynomials of degrees varying in the interval $(12\times10^{5}, 13\times10^{5})$. The average CPU time is calculated by averaging the execution times of the same program run 20 times. The figure clearly demonstrates that the proposed method, MHM-H, is faster than the other methods when computing the zeros of these polynomials.

\begin{figure}[p]
\centering

\begin{subfigure}[t]{0.48\textwidth}
    \centering
    \includegraphics[width=\linewidth]{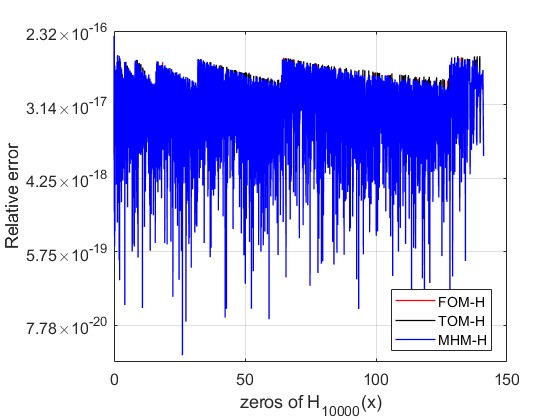}
    \caption{Nodes-Relative error \eqref{relative_error_ch5}}
    \label{error-nodes-hermite_ch5}
\end{subfigure}
\hfill
\begin{subfigure}[t]{0.48\textwidth}
    \centering
    \includegraphics[width=\linewidth]{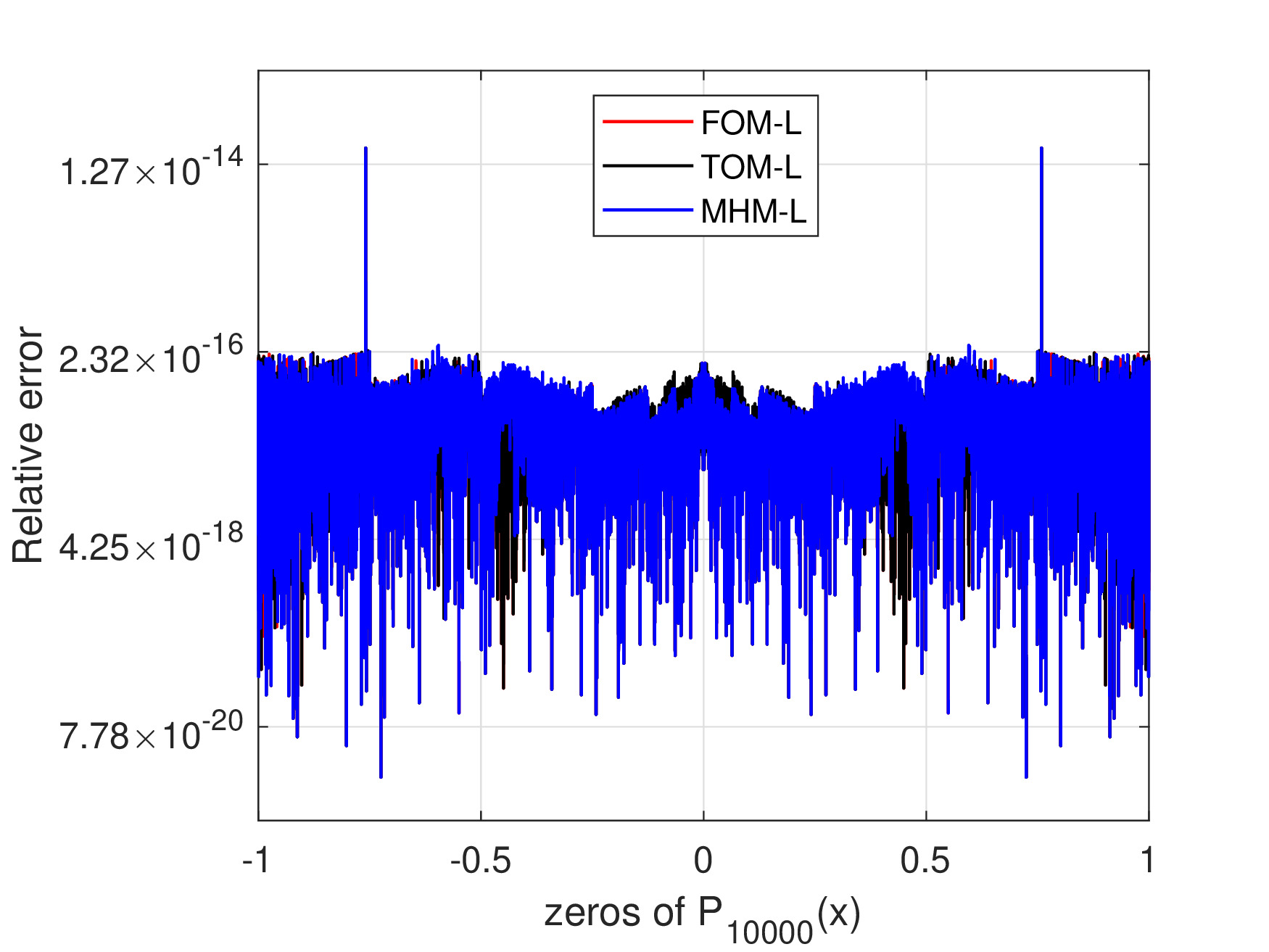}
    \caption{Nodes-Relative error \eqref{relative_error_ch5}}
    \label{error-nodes-legendre_ch5}
\end{subfigure}

\vspace{0.6cm}

\begin{subfigure}[t]{0.48\textwidth}
    \centering
    \includegraphics[width=\linewidth]{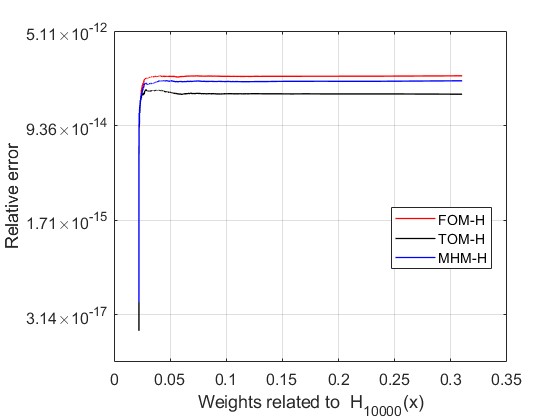}
    \caption{Weights-Relative error \eqref{relative_error_ch5}}
    \label{error-weights-hermite_ch5}
\end{subfigure}
\hfill
\begin{subfigure}[t]{0.48\textwidth}
    \centering
    \includegraphics[width=\linewidth]{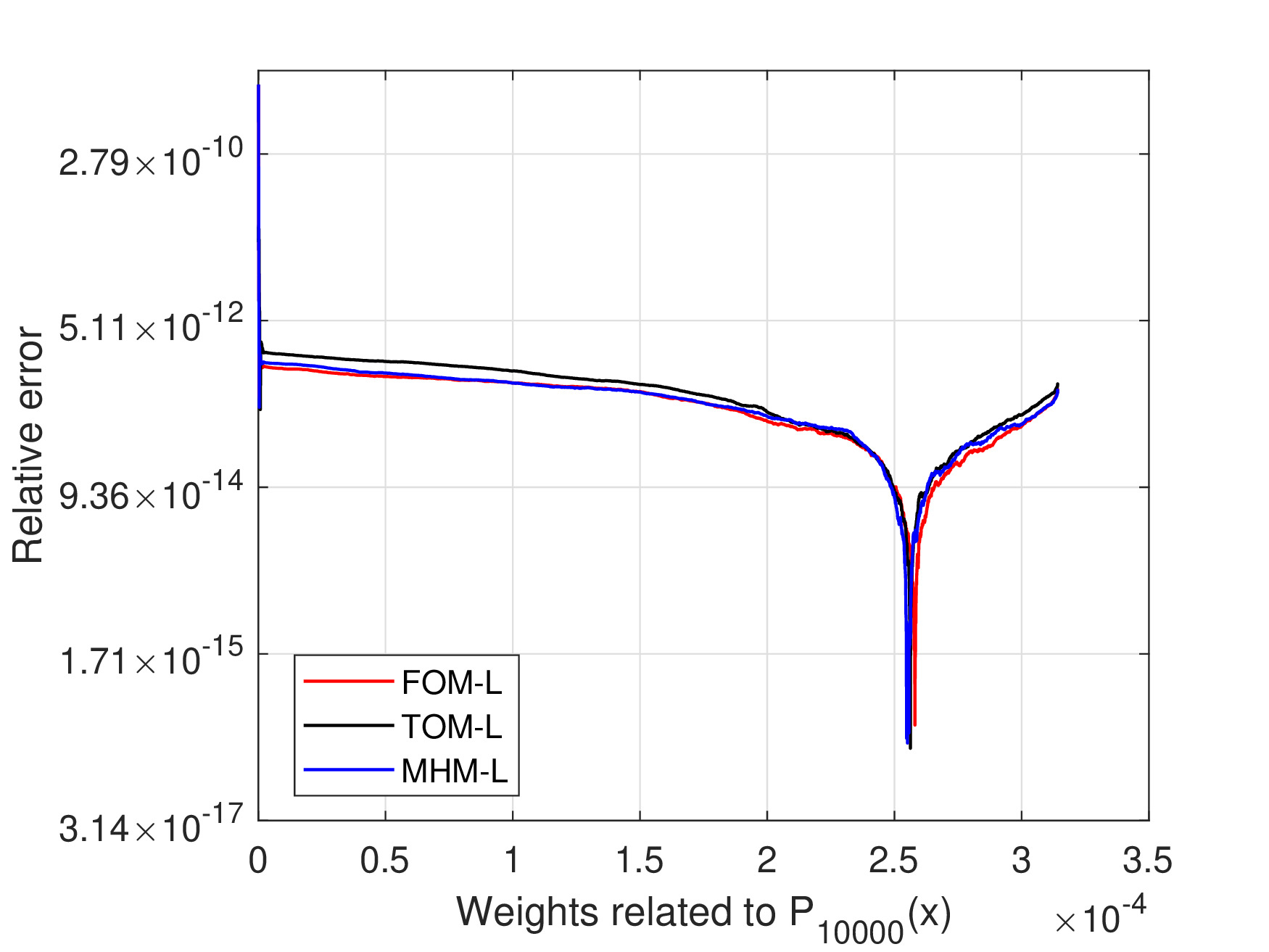}
    \caption{Weights-Relative error \eqref{relative_error_ch5}}
    \label{error-weights-legendre_ch5}
\end{subfigure}

\vspace{0.6cm}

\begin{subfigure}[t]{0.48\textwidth}
    \centering
    \includegraphics[width=\linewidth]{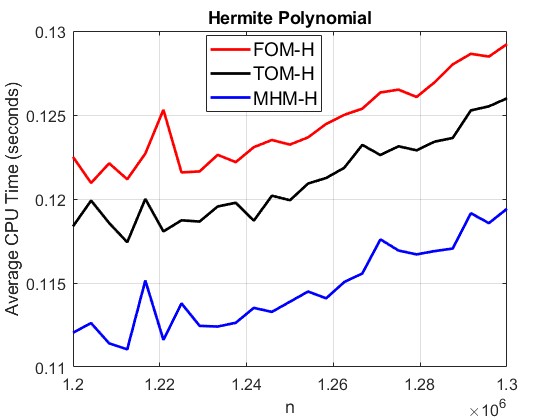}
    \caption{CPU-time-Gauss-Hermite}
    \label{CPU_hermtie_ch5}
\end{subfigure}
\hfill
\begin{subfigure}[t]{0.48\textwidth}
    \centering
    \includegraphics[width=\linewidth]{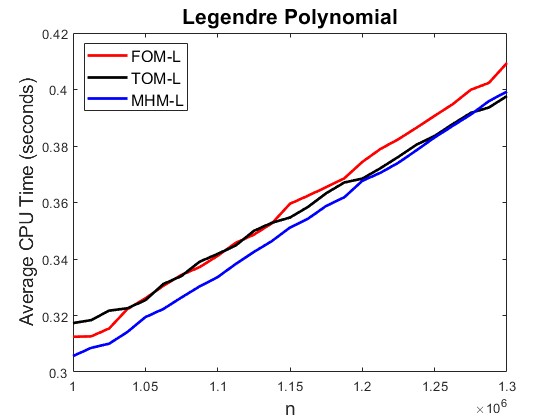}
    \caption{CPU-time-Gauss-Legendre}
    \label{CPU_legendre_ch5}
\end{subfigure}

\caption{FOM-H \cite{GST19} vs TOM-H \cite{PSV25} vs MHM-H  and FOM-L \cite{GST21a} vs TOM-L \cite{PSV25} vs MHM-L}
\label{fig:all_comparisons}
\end{figure}

\subsection{Gauss-Legendre quadrature}
This subsection outlines an algorithm that uses the proposed modified Halley's method \eqref{mscheme} to compute Gauss-Legendre quadrature nodes and weights. Let $x \in (-1, 1)$ and $n\in \mathbb{N}$. The Legendre polynomial $P_n(x)$ of degree $n$ is known to be a solution of the second-order differential equation given by $(1-x^2)y''(x) - 2xy'(x) +n(n+1) y(x) =0$. From \cite[eq. 6.20, p. 143]{Te96}, it can be concluded that the zeros of $P_n(x)$ are symmetric about $x=0$. It follows that identifying all the zeros of $P_n(x)$ requires only the determination of its positive zeros. Consider the change of variable $Y(\zeta) = \lambda P_n(\tanh \zeta)$ as discussed in \cite{GST19}, where $\zeta \in \mathbb{R}$ and $\lambda$ is a normalising constant. Denote $\frac{{\rm d}Y}{{\rm d}\zeta}=\dot{Y}$ and $\frac{{\rm d}^2Y}{{\rm d}\zeta^2}=\ddot{Y}$. It is straightforward to verify that the function $Y(\zeta)$ satisfies the following differential equation (see \cite[Eq. 27]{GST21a}):
\begin{equation}\label{legendre_normal_ch5_1}
\ddot{Y}(\zeta)+R(\zeta)Y(\zeta) = 0,
\end{equation}
where $R(\zeta) = \frac{L^2-1}{4}\text{sech}^2{\zeta}$ and $L = 2n+1$. Consequently, this subsection uses the proposed modified Halley's method to obtain all zeros of the function $Y(\zeta)$ and deduce the nodes and weights of the Gauss-Legendre quadrature. The coefficient function $R(\zeta)$ is positive and monotonically decreasing on $(0, \infty)$. Denote $h(\zeta) = \frac{Y(\zeta)}{\dot{Y}(\zeta)}$. 
The proposed modified Halley's method \eqref{mscheme} to find zero of $h(\zeta)$ with the initial guess $\zeta_0$ is
\begin{equation}\label{iteration_legendre_in_zeta}
    \zeta_{n+1} = \zeta_n-\frac{2h(\zeta_n)}{2+\frac{L^2-1}{4}\text{sech}^2{\zeta_0}~h^2(\zeta_n)}, ~~~n = 0, 1, \cdots.
\end{equation}
Define $y(x) = \lambda \sqrt{1-x^2}P_n(x)$. Let $x(\zeta) = \tanh \zeta$. Then, one can verify that
\begin{equation*}\label{legendre_h_in_x}
    h(\zeta) = b(x(\zeta)) = \frac{y(x(\zeta))}{(1-x^2(\zeta))y'(x(\zeta))+x(\zeta)y(x(\zeta))}.
\end{equation*}
Expressing the above iterative scheme \eqref{iteration_legendre_in_zeta} in terms of $x$, one gets
\begin{eqnarray}
   \tanh^{-1}(x_{n+1}) &=& \tanh^{-1}(x_n)-\frac{2b(x_n)}{2+\frac{L^2-1}{4}(1-x^2_{0})b^2(x_n)} \nonumber\\
    x_{n+1} &=& \frac{x_n-\tanh (k(x_n))}{1-x_n\tanh(k(x_n))},\label{Legendre_iterate_in_x}
\end{eqnarray}
where $k(x) = \frac{2b(x)}{2+\frac{L^2-1}{4}(1-x^2_{0})b^2(x)}$ and $x_0 = \tanh(\zeta_0)$. 
As discussed in \cite{GST21a}, the Gauss-Legendre weights $w_i$ are given by the expressions
\begin{equation}\label{weight_legendre}
   w_i = \frac{2}{y'(x_i)^2}\lambda^2;~~~~\lambda^2 = \left(\left[\frac{1}{y'(x_0)^2}\right]+2\sum_{i=1}^{\lfloor\frac{n}{2}\rfloor}\frac{1}{y'(x_i)^2}\right)^{-1},
\end{equation}
where $x_i$ is the $i^{\mbox{th}}$ positive node of the Gauss-Legendre quadrature. The term $\left[\frac{1}{y'(x_0)^2}\right]$ corresponds to the weight associated with the node $x_0=0$, which occurs only when $n$ is odd. Therefore, this term is omitted when $n$ is even.
\begin{table}[ht]
\centering
\scriptsize
\setlength{\tabcolsep}{3pt} 
\renewcommand{\arraystretch}{1.1} 
\resizebox{\textwidth}{!}{
\begin{tabular}{|c|cc|ccc|ccc|}
\hline
\multirow{2}{*}{\textbf{$\boldsymbol{n}$}} 
& \multicolumn{2}{c|}{\textbf{FOM-L} \cite{GST21a}} 
& \multicolumn{3}{c|}{\textbf{TOM-L} \cite{PSV25}} 
& \multicolumn{3}{c|}{\textbf{MHM-L}} \\ \cline{2-9}

& \textbf{T-Iter} & \textbf{A-Time} 
& \textbf{T-Iter} & \textbf{A-Time} & \textbf{MR.Er-F} 
& \textbf{T-Iter} & \textbf{A-Time} & \textbf{MR.Er-F} \\ \hline

1000000 & 1000004 & 0.3058 & 1130685 & 0.3118 & $5.96\times10^{-11}$ & 1000043 & 0.3013 & $3.33\times10^{-16}$ \\
1050000 & 1050004 & 0.3181 & 1173856 & 0.3200 & $6.82\times10^{-11}$ & 1050044 & 0.3145 & $3.46\times10^{-16}$ \\
1100000 & 1100004 & 0.3333 & 1229162 & 0.3347 & $6.22\times10^{-11}$ & 1100038 & 0.3294 & $3.34\times10^{-16}$ \\
1200000 & 1200004 & 0.3660 & 1321763 & 0.3635 & $7.08\times10^{-11}$ & 1200041 & 0.3606 & $3.38\times10^{-16}$ \\
1300000 & 1300003 & 0.3965 & 1417548 & 0.3901 & $7.64\times10^{-11}$ & 1300035 & 0.3942 & $3.33\times10^{-16}$ \\ \hline

\end{tabular}
}
\caption{Performance: FOM-L \cite{GST21a} vs TOM-L \cite{PSV25} vs Proposed MHM-L \eqref{mscheme}}
\label{comparison_legendre_table_ch5}
\end{table}

Table~\ref{comparison_legendre_table_ch5} displays the total number of iterations and the average time taken by FOM-L \cite{GST21a}, TOM-L \cite{PSV25}, and the proposed modified Halley's method, MHM-L. This data pertains to the computation of all the zeros of selected Legendre polynomials of varying degrees within the range of $[10^{6}, 13\times 10^{5}]$. In Table \ref{comparison_legendre_table_ch5}, the performance of the proposed modified Halley's method, alongside the third-order method \cite{PSV25}, is compared with that of the fourth-order method \cite{GST21a}. As anticipated, FOM-L \cite{GST21a} computes all the zeros of $P_n(x)$ with the least number of iterations. Among third-order methods, the proposed modified Halley's method requires significantly fewer iterations than the method presented in \cite{PSV25} to compute all zeros of $P_n(x)$, with better accuracy. The order of error for TOM-L is $10^{-11}$, attributed to the presence of a few zeros close to 1, as noted in \cite{PSV25}. In contrast, the proposed third-order method does not encounter this problem. Although the proposed method is third-order, the difference in the total number of iterations compared to the fourth-order method \cite{GST21a} is relatively minor. The proposed modified Halley's method has the least execution time. This result could be attributed to the computational complexity of repeatedly evaluating functions within the respective iterative methods.

Figure \ref{error-nodes-legendre_ch5} presents the relative errors \eqref{relative_error_ch5} for the methods MHM-L \eqref{mscheme}, FOM-L \cite{GST21a}, and TOM-L \cite{GST19}, evaluated at various nodes of Gauss-Legendre quadrature. More specifically, Figure \ref{error-nodes-legendre_ch5} displays the relative error \eqref{relative_error_ch5} across the ten thousand nodes associated with the Legendre polynomial of degree ten thousand. The figure clearly shows that all methods produce nearly the same level of accuracy. Figure \ref{error-weights-legendre_ch5} presents the relative errors \eqref{relative_error_ch5} for the methods MHM-L \eqref{mscheme}, FOM-L \cite{GST21a}, and TOM-L \cite{PSV25}, evaluated at various weights of Gauss-Legendre quadrature. More specifically, Figure \ref{error-weights-legendre_ch5} displays the relative error \eqref{relative_error_ch5} across the five thousand weights corresponding to positive nodes associated with the Legendre polynomial of degree ten thousand. The figure clearly shows that all methods produce good accuracy. Figure \ref{CPU_legendre_ch5} presents the average CPU time for the methods MHM-L \eqref{mscheme}, FOM-L \cite{GST21a}, and TOM-L \cite{PSV25} to compute all zeros of the twenty-five Hermite polynomials of degrees varying in the interval $(10^{6}, 13\times10^{5})$. The average CPU time is calculated by averaging the execution times of the same program run 20 times. The figure clearly demonstrates that the proposed method, MHM-L, is faster than the other methods when computing the zeros of these polynomials.

\section{Conclusion}
By solving a Riccati equation related to a second-order differential equation using the trapezoidal rule, a third-order iterative method is developed to identify the zeros of the solutions to the Riccati equation. This third-order method aligns with the well-known Halley's method for finding the zeros of the solutions to the associated second-order differential equation. A modification of Halley's method ensures third-order convergence and requires functional evaluation similar to that of Newton's method for a class of functions. Based on the behaviour of the coefficient function of the Riccati ODE, global convergence results for both Halley's method and the modified method have been established. Algorithms have been developed using the modified method to compute the nodes and weights for Gauss–Legendre and Gauss–Hermite quadrature. Numerical experiments illustrate the efficiency of the proposed algorithms. A comparative study with recent algorithms in the literature is also included. Finally, regions where the proposed algorithms surpass existing methods in CPU time have been identified and presented.
\section*{Declaration of generative AI and AI-assisted technologies in the writing process}
    During the preparation of this work, the authors used Grammarly to enhance readability. After using this tool/service, the authors reviewed and edited the content as needed and take full responsibility for the content of the published article.

\end{document}

%% file: ex_shared.tex
\usepackage{lipsum}
\usepackage{amsfonts}
\usepackage{graphicx}
\usepackage{epstopdf}
\usepackage{algorithm}
\usepackage{algpseudocode}
\ifpdf
  \DeclareGraphicsExtensions{.eps,.pdf,.png,.jpg}
\else
  \DeclareGraphicsExtensions{.eps}
\fi

\newsiamremark{remark}{Remark}
\newsiamremark{hypothesis}{Hypothesis}
\crefname{hypothesis}{Hypothesis}{Hypotheses}
\newsiamthm{claim}{Claim}
\newsiamremark{fact}{Fact}
\crefname{fact}{Fact}{Facts}
\newsiamremark{example}{Example}

\headers{Modified Halley's method for zeros of solution of ODE{\lowercase{s}}}{K. D. Prabhu, S. Singh and V. A. Vijesh}

\title{Modified Halley's method for the computation of zeros of solutions to second-order ODE{\lowercase{s}} with applications\thanks{Submitted to the editors DATE. 29/05/2026
\funding{The first author, Dhivya Prabhu K, gratefully acknowledges the financial support from the Council of Scientific and Industrial Research, India (Grant No. 09/1022(11054)/2021-EMR-I).}}}

\author{
Dhivya Prabhu K\thanks{Department of Mathematics, Indian Institute of Technology Indore, 
Indore 453552, India (\texttt{dhivyanslm@gmail.com}, \texttt{snjvsngh@iiti.ac.in}, \texttt{vijesh@iiti.ac.in}).}
\and
Sanjeev Singh$^\dagger$
\and
Antony Vijesh V$^\dagger$
}

\usepackage{amsopn}


%% file: ex_article.bbl
\begin{thebibliography}{10}
\bibitem{AO08}
{\sc R. P. Agarwal and D. O'Regan}, {\em An Introduction to Ordinary Differential Equations}, Springer, 2008.

	\bibitem{BMF12}
	{\sc I. Bogaert, B. Michiels, and J. Fostier}, {\em $\mathcal{O}(1)$ computation of Legendre polynomials and Guass-Legendre nodes and weights for parallel computing}, SIAM J. Sci. Comput., 34 (2012), pp. c83-c101. 
    
        
    \bibitem{Br17}
		{\sc J. Bremer}, {\em On the numerical calculation of the roots of special functions satisfying second order ordinary differential equations}, {SIAM J. Sci. Comput.}, 39 (2017), A55-A82.
        
        
        

    
	
	
	
	\bibitem{GS03}
	{\sc	A. Gil and J. Segura }, \textit{Computing the zeros and turning points of solutions of second order homogeneous linear ODEs}, SIAM J. Numer. Anal.,  41 (2003), pp. 827-855, \href{https://doi.org/10.1137/S0036142901392754}{https://doi.org/10.1137/S0036142901392754}.
    

    
	
	
	
	\bibitem{GST19}
	{\sc A. Gil, J. Segura, and N. M. Temme}, \textit{Fast, reliable and unrestricted iterative computation of Gauss-Hermite and Gauss-Laguerre quadratures}, Numer. Math. 3 (2019), pp. 649-682.
	
	
	\bibitem{GST21a} 	
	{\sc A. Gil, J. Segura, and N.M. Temme}, {\em Fast and reliable high-accuracy computation of Gauss-Jacobi quadrature,} {Numer. Algorithms.,} 87 (2021), pp. 1391-1419. 
	\href{https://doi.org/10.1007/s11075-020-01012-6}{https://doi.org/10.1007/s11075-020-01012-6.}
	\bibitem{GLR07}
	{\sc A. Glaser, X. Liu, and V. Rokhlin,} {\em A fast algorithm for the calculation of the root of special functions}, SIAM J. Sci. Comput., 29 (2007), pp. 1420-1438.
	
	\bibitem{HT13}
	{\sc N. Hale and A. Townsend}, {\em Fast and accurate computation of Gauss-Legendre and Gauss-Jacobi quadrature nodes and weights}, SIAM J. Sci. Comput., 35 (2013), pp. A652-A674.
	



        



    
	
	\bibitem{Me97}
	{\sc A. Melman,} \textit{Geometry and convergence of Euler's and Halley's method}, SIAM Rev., 39 (1997), pp. 728-735.
    


\bibitem{Ol50}
{\sc F.~W.~J. Olver}, {\em A new method for the evaluation of zeros of Bessel functions and of other solutions of second-order differential equations}, Proc. Cambridge Philos. Soc., {46} (1950), pp. 570--580.

\bibitem{OLBC10}
	{\sc F.W.J. Olver, D.W. Lozier, R.F. Boisvert, and C.W. Clark}, {\em NIST Handbook of Mathematical Functions}, Cambridge University Press, Cambridge, UK (2010).


    
    
    \bibitem{PSV25}
    {\sc K. D. Prabhu, S. Singh, and V. A. Vijesh},  {\em Efficient third-order iterative algorithms for computing zeros of special functions}, \href{https://doi.org/10.48550/arXiv.2601.04148}{https://doi.org/10.48550/arXiv.2601.04148}, Under review.


    
	
	
	\bibitem{Se02}
	{ \sc J. Segura}, \textit{The zeros of special functions from a fixed point method}, SIAM J. Numer. Anal., 40 (2002), pp. 114-133, \href{https://doi.org/10.1137/S0036142901387385}{https://doi.org/10.1137/S0036142901387385}.
	
	\bibitem{Se10}
	{\sc J. Segura}, \textit{Reliable computation of the zeros of solutions of second order linear ODEs using a fourth order method}, SIAM J. Numer. Anal., 48 (2010), pp. 452–469, \href{https://doi.org/10.1137/090747762}{https://doi.org/10.1137/090747762}.
	
\bibitem{SV90}
{\sc R. Spigler, and M. Vianello}, \textit{A numerical method for evaluating zeros of solutions of second-order linear differential equations}, Math. Comp., 55 (1990), no.~192, pp. 591--612.

	\bibitem{Te96}
	{\sc N. M. Temme}, {\em Special functions: An Introduction to the Classical Functions of Mathematical Physics}, John Wiley \& Sons, (2011).
	
	\bibitem{TTO16}
	{\sc A. Townsend, T. Trogdon, and S. Olver}, {\em Fast computation of Gauss quadrature nodes and weights on the whole real line}, IMA J. Numer. Anal., 36 (2016), pp. 337-358, \href{https://doi.org/10.1093/imanum/drv002}{https://doi.org/10.1093/imanum/drv002}.
	

    
\end{thebibliography}
